\def\NZQ{\mathbb}               
\def\NN{{\NZQ N}}
\def\QQ{{\NZQ Q}}
\def\frk{\frak}               
\def\Phi{{\frk n}}
\def\Phi{{\frk N}}
\def\KK{{\mathbb K}}
\def\opn#1#2{\def#1{\operatorname{#2}}} 
\opn\chara{char}
\opn\length{\ell}
\opn\pd{pd}
\opn\rk{rk}
\opn\projdim{proj\,dim}
\opn\cohdim{cd}
\opn\injdim{inj\,dim}
\opn\rank{rank}
\opn\depth{depth}
\opn\diam{diam}
\opn\grade{grade}
\opn\height{height}
\opn\embdim{emb\,dim}
\opn\codim{codim}
\opn\Tr{Tr}
\opn\bigrank{big\,rank}
\opn\superheight{superheight}
\opn\lcm{lcm}
\opn\trdeg{tr\,deg}
\opn\reg{reg}
\opn\hilb{Hilb}
\opn\hpolynomial{h}
\opn\cdeg{cdeg}
\opn\lreg{lreg}
\opn\ini{in}
\opn\lpd{lpd}
\opn\size{size}
\opn\bigsize{bigsize}
\opn\cosize{cosize}
\opn\bigcosize{bigcosize}
\opn\sdepth{sdepth}
\opn\sreg{sreg}
\opn\link{link}
\opn\fdepth{fdepth}
\opn\lin{lin}
\opn\ini{in}
\opn\div{div}
\opn\Div{Div}
\opn\cl{cl}
\opn\Cl{Cl}
\opn\Spec{Spec}
\opn\Supp{Supp}
\opn\supp{supp}
\opn\Sing{Sing}
\opn\Ass{Ass}
\opn\Min{Min}
\opn\Mon{Mon}
\opn\dstab{dstab}
\opn\astab{astab}
\opn\Syz{Syz}
\opn\Ann{Ann}
\opn\Rad{Rad}
\opn\Soc{Soc}
\opn\Im{Im}
\opn\Ker{Ker}
\opn\Coker{Coker}
\opn\Am{Am}
\opn\Hom{Hom}
\opn\Tor{Tor}
\opn\Ext{Ext}
\opn\End{End}
\opn\Aut{Aut}
\opn\id{id}
\opn\nat{nat}
\opn\pff{pf}
\opn\Pf{Pf}
\opn\GL{GL}
\opn\SL{SL}
\opn\mod{mod}
\opn\ord{ord}
\opn\Gin{Gin}
\opn\Hilb{Hilb}
\opn\sort{sort}
\opn\initial{init}
\opn\ende{end}
\opn\height{height}
\opn\type{type}
\opn\mdeg{mdeg}
\opn\aff{aff}
\opn\con{conv}
\opn\relint{relint}
\opn\st{st}
\opn\lk{lk}
\opn\cn{cn}
\opn\core{core}
\opn\vol{vol}
\opn\link{link}
\opn\lex{lex}
\opn\sign{sign}
\opn\gr{gr}
\def\pot#1#2{#1[\kern-0.28ex[#2]\kern-0.28ex]}
\opn\dirlim{\underrightarrow{\lim}}
\opn\inivlim{\underleftarrow{\lim}}
\def\Implies{\ifmmode\Longrightarrow \else
	\unskip${}\Longrightarrow{}$\ignorespaces\fi}
\def\implies{\ifmmode\Rightarrow \else
	\unskip${}\Rightarrow{}$\ignorespaces\fi}
\def\iff{\ifmmode\Longleftrightarrow \else
	\unskip${}\Longleftrightarrow{}$\ignorespaces\fi}
\newtheorem{Theorem}{Theorem}[section]
\newtheorem{Lemma}[Theorem]{Lemma}
\newtheorem{Corollary}[Theorem]{Corollary}
\newtheorem{Remark}[Theorem]{Remark}
\newtheorem{Definition}[Theorem]{Definition}
\let\epsilon\varepsilon
\def\pnt{{\raise0.5mm\hbox{\large\bf.}}}
\begin{document}
	\title{On the depth of binomial edge ideals of graphs}
	\author {M. Rouzbahani Malayeri, S. Saeedi Madani, D. Kiani}
		
	\address{Mohammad Rouzbahani Malayeri, Department of Mathematics and Computer Science, Amirkabir University of Technology (Tehran Polytechnic), Tehran, Iran}
	\email{m.malayeri@aut.ac.ir}
	
		\address{Sara Saeedi Madani, Department of Mathematics and Computer Science, Amirkabir University of Technology (Tehran Polytechnic), Tehran, Iran, and School of Mathematics, Institute for Research in Fundamental Sciences (IPM), Tehran, Iran}
	\email{sarasaeedi@aut.ac.ir}
	
		\address{Dariush Kiani, Department of Mathematics and Computer Science, Amirkabir University of Technology (Tehran Polytechnic), Tehran, Iran}
	\email{dkiani@aut.ac.ir}

	\begin{abstract}
Let $G$ be a graph on the vertex set $[n]$ and $J_G$ the associated binomial edge ideal in the polynomial ring $S=\KK[x_1,\ldots,x_n,y_1,\ldots,y_n]$. In this paper we investigate the depth of binomial edge ideals. More precisely, we first establish a combinatorial lower bound for the depth of $S/J_G$ based on some graphical invariants of $G$. Next, we combinatorially characterize all binomial edge ideals $J_G$ with $\depth S/J_G=5$. To achieve this goal, we associate a new poset $\mathcal{M}_G$ with the binomial edge ideal of $G$, and then elaborate some topological properties of certain subposets of $\mathcal{M}_G$ in order to compute some local cohomology modules of~$S/J_G$.

	\end{abstract}

	
	\subjclass[2010]{13C15; 05E40; 13C70; 13C05}
	\keywords{Binomial edge ideals, depth, diameter of a 
	 graph, Hochster type formula, meet-contractible}
	
	\maketitle
	
\section{Introduction}\label{intro}

Over the last two decades, the study of ideals with combinatorial origins has been an appealing trend in commutative algebra. One of the most well-studied types of such ideals which has attracted special attention in the literature is the binomial edge ideal of a graph. 
\par Let $G$ be a graph on $[n]$ and $S=\KK[x_1 ,\ldots ,x_n , y_1 , \ldots , y_n]$ be the polynomial ring over a field $\KK$. Then, the \emph{binomial edge ideal} associated with $G$, denoted by $J_G$, is the ideal in $S$ generated by all the quadratic binomials of the form $f_{ij}=x_{i}y_{j}-x_{j}y_{i}$, where $\{i,j\}\in E(G)$ and $1\leq i<j\leq n$. This class of ideals was  introduced in 2010 by Herzog, Hibi, Hreinsd{\'o}ttir, Kahle and Rauh in \cite{HHHKR}, and independently by Ohtani in \cite{O}, as a natural generalization of determinantal ideals, as well as ideals  generated by adjacent $2$-minors of a $2\times n$-matrix of indeterminates. 
\par Since then, many researchers have studied the algebraic properties and homological invariants of binomial edge ideals. The main goal is to understand how the invariants of the associated graph are reflected in the algebra of the ideal and vice versa. Indeed, it has been proved that there exists a mutual interaction between algebraic properties of binomial edge ideals and  combinatorial properties of the underlying graphs, see e.g.,  \cite{A, BN, BMS, BMS2, EHH, ERT, ERT2, EZ, G, JKS, KS2, Kumar3, MM, RSK2, RSK3, SK, SK2} for some efforts in this direction.
\par One of the homological invariants associated with  binomial edge ideals, which is not easy to compute, is \emph{depth}. Recall that 
\[
\depth S/J_G=\min \{i:H_{\mathfrak{m}}^{i}(S/J_G)\neq 0\},
\]
where $H_{\mathfrak{m}}^{i}(S/J_G)$ denotes the $i^{th}$ local cohomology module of $S/J_G$ with support at the maximal homogeneous ideal $\mathfrak{m}=(x_1,\ldots,x_n,y_1,\ldots,y_n)$ of $S$. 
\par Notice that unlike some other homological invariants associated with binomial edge ideals, like the Castelnuovo-Mumford regularity, which has been studied extensively, little is known about the depth of binomial edge ideals. Moreover, since the  depth is in general dependent on the characteristic of the base field, finding some characteristic-free results about the depth of such homogeneous ideals is of great interest. In the following we briefly summarize the results in this direction.
\par The first important result about the depth of binomial edge ideals appeared in~\cite{EHH}, where the authors showed that $\depth S/J_G=n+1$, where $G$ is a connected \emph{block} graph. Afterward, in \cite{ZZ} it was shown that $\depth S/J_G=n$, where $G$ is a cycle with more than $3$ vertices. Later, this result was generalized in~\cite{Kumar2} for the so-called \emph{quasi-cycle} graphs. Also, it was shown in~\cite{S} that the depth of the binomial edge ideal of a \emph{unicyclic} graph is either $n$ or $n+1$, and all unicyclic graphs of depth $n$ and $n+1$ were characterized. In~\cite{KumarS}, by generalizing a formula for the depth of \emph{complete bipartite} graphs in~\cite{SZ}, the authors gave some precise formulas for the depth of the \emph{join product} of graphs.
In \cite{BN}, Banerjee and N\'u\~nez-Betancourt established a nice combinatorial upper bound for the depth of binomial edge ideals in terms of the \emph{vertex connectivity} of the underlying graph. Indeed, for a non-complete connected graph $G$, they showed that
\begin{equation}\label{upper}
\depth S/J_G\leq n-\kappa(G)+2,
\end{equation} 
where $\kappa(G)$ denotes the vertex connectivity of $G$, which is the minimum number of vertices of $G$ whose removal makes $G$ disconnected.
\par It is worth mentioning here that to the best of our knowledge, besides the above combinatorial upper bound for the depth of binomial edge ideals, there is no combinatorial lower bound for the depth of $S/J_G$. So, as the first main result of this paper, we supply the following combinatorial lower bound for the depth of binomial edge ideals:

\medskip

\noindent\textbf{Theorem A}~(Theorem \ref{mixed})\textbf{.}
\emph{Let $G$ be a graph on $[n]$. Then
$$\depth S/J_G\geq \xi(G).$$
In particular, if $G$ is connected, then 
$$\depth S/J_G\geq f(G)+\diam(G).$$
}
\par Here, $\xi(G)=f(G)+d(G)$ where $f(G)$ denotes the number of \emph{free} vertices (or \emph{simplicial} vertices) of the graph $G$, and $d(G)$ denotes the sum of the \emph{diameters} of the connected components of $G$, and the number of the isolated vertices of $G$. Moreover, $\diam(G)$ denotes the diameter of $G$. 
\par In order to prove Theorem A, we first introduce the concept of $d$-compatible maps. Such maps are defined from the set of all graphs to the set of non-negative integers that admit certain properties (see Definition \ref{psi}). Then, we exploit this concept to establish a general lower bound for the depth of binomial edge ideals. We also provide a combinatorial $d$-compatible map, which yields the above lower bound. We also show that our lower bound is best possible in the sense that there are graphs $G$ for which $\depth S/J_G=\xi(G)$, see Figures \ref{sharp} and \ref{new}.

\par Now, we would like to mention another motivation of this paper.  In \cite{RSK2}, the authors of the present paper provided a general lower bound for the depth of binomial edge ideals. Indeed, they showed that $\depth S/J_G\geq 4$, where $G$ is a connected graph with at least three vertices.
Moreover, they gave an explicit characterization of the graphs $G$ for which $\depth S/J_G=4$. More precisely, they showed that for graphs $G$ with more than three vertices, $\depth S/J_G=4$ if and only if $G=G'\ast 2K_1$, for some graph $G'$, where $G'\ast 2K_1$ denotes the join product of a graph $G'$ and two isolated vertices denoted by $2K_1$. 
\par Now, it is natural to ask about a combinatorial characterization of binomial edge ideals of higher depths. In this paper, by using a wonderful Hochster type formula for the local cohomology modules of binomial edge ideals provided by \`Alvarez~ Montaner in \cite{A}, we give such a characterization. Indeed, we prove the following characterization of the graphs $G$ for which $\depth S/J_G=5$, see Definition \ref{D5} for the required notation.

\medskip

\noindent\textbf{Theorem B}~(Theorem \ref{main})\textbf{.}
\emph{Let $G$ be a graph on $[n]$ with $n\geq 5$. Then the following statements are equivalent:}
\begin{enumerate}
\item[{(a)}] $\depth S/J_G=5$.
\item[{(b)}] $G$ is a $D_5$-type graph.
\end{enumerate}
\par \medskip The proof of Theorem B involves some topological results that we obtain in this paper about some specific subposets of a poset associated with  binomial edge ideals, which are indeed of independent interest.
\par The organization of this paper is as follows. In Section \ref{per}, we fix the notation and review some facts and  definitions that will be used throughout the paper.
\par In Section \ref{benefit}, toward providing some lower bounds for the depth of binomial edge ideals, in Definition \ref{psi}, we introduce a concept which is named as $d$-compatible maps. Such maps are defined from the set of all graphs to the set of non-negative integers with some specific properties. Then, in Theorem \ref{dcompatible}, by using the aforementioned concept,  a general lower bound is given for the depth of binomial edge ideals. In addition, after providing a combinatorial $d$-compatible map in Theorem~\ref{xi}, a combinatorial lower bound is given for the depth of binomial edge ideals in Theorem~\ref{mixed}. This bound,  together with a result  from \cite{RSK2}, provides a modified version of the bound given in Theorem \ref{mixed}.
\par In Section \ref{hochster type}, following the poset theoretical as well as the topological approaches used in \cite{A} and \cite{RSK2}, we associate a new poset with binomial edge ideals in Definition \ref{typo}. Then, we state in Theorem~\ref{Hochster}, the Hochster type formula for the local cohomology modules of binomial edge ideals arose from \cite[Theorem~3.9]{A}.   
\par  In Section \ref{topology}, we use the Hochster type formula provided in Section \ref{hochster type} to characterize all graphs $G$ for which $\depth S/J_G=5$, in Theorem~\ref{main}. To prove our characterization, we need to provide several auxiliary ingredients. In particular, Theorem \ref{corona} which studies the vanishing of the zeroth and the first reduced cohomology groups of some subposets of the associated poset with    binomial edge ideals, plays a crucial role in the proof of Theorem \ref{main}.

\section{Preliminaries}\label{per}
In this section, we recall some notions and known facts that are used in this paper.

\par \medskip

\textbf{Graph theory.} Throughout the paper, all graphs are assumed to be simple (i.e. with no loops, directed and multiple edges). Let $G$ be a graph on the vertex set $[n]$ and $T\subseteq [n]$. A subgraph $H$ of $G$ on the vertex set $T$ is called an \emph{induced subgraph} of $G$, whenever for any two vertices $u,v\in T$, one has $\{u,v\}\in E(H)$ if $\{u,v\}\in E(G)$. Now, by $G-T$, we mean the induced subgraph of $G$ on the vertex set $[n]\backslash T$. In the special case, when $T=\{v\}$, we use the notation $G-v$ instead of $G-\{v\}$, for simplicity.
 
\par Let $v\in [n]$. Denoted by $N_G(v)$, is the set of all vertices of $G$ which are adjacent to $v$. We say that $v$ is a   \emph{free} vertex (or \emph{simplicial} vertex) of $G$, if the induced subgraph of $G$ on the vertex set $N_G(v)$ is a complete graph. Moreover, a vertex which is not free, is called a non-free (or non-simplicial) vertex. We use $f(G)$ and $iv(G)$ to denote the number of free vertices and the number of non-free vertices of a graph $G$, respectively.

\par A vertex $v\in [n]$ is said to be a \emph{cut vertex} of $G$ whenever $G-v$ has more connected components than $G$. Moreover, we say that $T$ has the  \emph{cut point property} for $G$, whenever each $v\in T$ is a cut vertex of the graph $G-(T\backslash \{v\})$. Particularly,  the empty set $\emptyset$, has the cut point property for $G$.
\par Let $G_1$ and $G_2$ be two graphs on the disjoint vertex sets $V(G_1)$ and $V(G_2)$, respectively. Then, by the \emph{join product} of $G_1$ and $G_2$, denoted by $G_1*G_2$, we mean the graph on the vertex set $V(G_1)\cup V(G_2)$ and with the edge set 
\[
E(G_1)\cup E(G_2)\cup \{\{u,v\}: u\in V(G_1)~\mathrm{and}~v\in V(G_2)\}.
\]
\par
\textbf{Primary decomposition of binomial edge ideals.} Let $G$ be a graph on $[n]$ and $T\subseteq [n]$. Let also $G_1 , \ldots , G_{c_G(T)}$ be the connected components of $G-T$, and $\widetilde{G}_1 , \ldots , \widetilde{G}_{c_G(T)}$ be the complete graphs on the vertex sets $V(G_1), \ldots , V(G_{c_G(T)})$, respectively. Let
\[
P_T(G)=(x_v,y_v)_{v\in T} +J_{\widetilde{G}_1}+\cdots +J_{\widetilde{G}_{c_G(T)}}.
\]
Then, by \cite[Theorem~3.2]{HHHKR}, it is known that $J_{G}=\bigcap\limits_{\substack{T\subseteq [n]}}P_T(G)$. Moreover,  in \cite[Corollary~3.9]{HHHKR}, all the minimal prime ideals of $J_G$ were determined. Indeed, it was shown that $P_T(G)\in \Min(J_G)$ if and only if $T\in \mathcal{C}(G)$, where 
\[
\mathcal{C}(G)=\{T\subseteq [n]:T~\mathrm{has~the~cut~point~property~for}~G\}.
\]
Finally, the following useful formula could be easily verified:
\[
\mathrm{ht}\hspace{0.9mm}P_T(G)=n-c_G(T)+|T|.
\]
\par
\textbf{Poset topology.} Let $\Delta$ be a simplicial complex. Then, by the \emph{$1$-skeleton} graph of $\Delta$ we mean the subcomplex of $\Delta$ consisting of all the faces of $\Delta$ which have cardinality at most $2$. The simplicial complex $\Delta$ is said to be \emph{connected} if its $1$-skeleton graph is connected.
\par  Let $(\mathcal{P},\preccurlyeq)$ be a poset. Recall that the \emph{order complex} of $\mathcal{P}$, denoted by $\Delta(\mathcal{P})$, is the simplicial complex whose facets are the maximal chains in $\mathcal{P}$. We say that $\mathcal{P}$ is a  connected poset if its order complex $\Delta(\mathcal{P})$ is connected. Similarly, we say that $\mathcal{P}$ is contractible if $\Delta(\mathcal{P})$ is contractible.    If $\mathcal{P}$ is an empty poset, then we consider $\Delta(\mathcal{P})=\{\emptyset\}$, i.e. the empty simplicial complex.
\par \medskip 
\textbf{Mayer-Vietoris sequence.} Let $\Delta$ be a simplicial complex and $v\in V(\Delta)$. 
Recall the following three subcomplexes of $\Delta$ that will be used in this paper.
\begin{itemize}

\item $\mathrm{star}_\Delta(v)=\{\sigma\in\Delta : \sigma\cup \{v\}\in \Delta\}$;

\item $\mathrm{del}_\Delta(v)=\{\sigma\in\Delta : v\not\in\sigma\}$;
\item $\mathrm{link}_\Delta(v)=\{\sigma\in\Delta : v\not\in\sigma \hbox{ and } \sigma\cup \{v\}\in \Delta\}$.
\end{itemize}
\par Let $\Delta_1=\mathrm{star}_\Delta(v)$ and $\Delta_2=\mathrm{del}_\Delta(v)$. Then $\Delta_1\cup \Delta_2=\Delta$ and $\Delta_1\cap \Delta_2=\mathrm{link}_\Delta(v)$, and we have the \emph{Mayer-Vietoris} sequence:
\begin{eqnarray*}
 \cdots \rightarrow H_{i}(\mathrm{link}_\Delta(v);\KK) \rightarrow  H_i(\mathrm{star}_\Delta(v);\KK)\oplus H_i(\mathrm{del}_\Delta(v);\KK)\rightarrow\\
H_i(\Delta;\KK)\rightarrow H_{i-1}(\mathrm{link}_\Delta(v);\KK)\rightarrow\cdots \hspace{1cm}
\end{eqnarray*} 
Moreover, we have the reduced version
\[
\cdots \rightarrow \widetilde{H}_{0}(\mathrm{link}_\Delta(v);\KK)\rightarrow \widetilde{H}_0(\mathrm{star}_\Delta(v);\KK)\oplus \widetilde{H}_0(\mathrm{del}_\Delta(v);\KK)\rightarrow \widetilde{H}_0(\Delta;\KK)\rightarrow 0,
\] 
provided that $\Delta_1\cap\Delta_2\neq \{\emptyset\}$.

\section{A combinatorial lower bound for the depth of binomial edge ideals}\label{benefit}
Our main goal in this section is to establish some lower bounds for the depth of binomial edge ideals. We first introduce the concept of $d$-\emph{compatible} maps,  which are defined from the set of all graphs to the set of non-negative integers $\NN_0$ with some desirable properties. Then, considering this concept, we give a general lower bound for the depth of binomial edge ideals. We also provide a combinatorial $d$-compatible map to obtain a combinatorial lower bound for the depth of such ideals as well.

\par We first introduce a graph that plays an important role in proving the main theorem of this section. Let $G$ be a graph on $[n]$ and $v\in [n]$. Associated with the vertex $v$, there is a graph, denoted by $G_{v}$, with the vertex set $V(G)$ and the edge set 
\[
E(G)\cup \{\{u,w\}: \{u,w\}\subseteq N_{G}(v)\}.
\]
Note that by the definition, it is clear that $v$ is a free vertex of the graph $G_v$, and $N_G(v)=N_{G_v}(v)$.
\par Now we are ready to define the notion of a $d$-compatible map. In the following, by $K_t$ we mean the complete graph on $t$ vertices, for every $t\in \mathbb{N}$.  
\begin{Definition}\label{psi}
\em{
Let $\mathcal{G}$ be the set of all graphs. A map $\psi:\mathcal{G}\longrightarrow \NN_0$ is called \emph{d-compatible}, if it satisfies the following conditions:
\begin{enumerate}
\item[{(a)}] if $G=\dot{\cup}_{i=1}^{t}K_{n_i}$, where $n_i\geq 1$ for every $1\leq i\leq t$, then $\psi(G)\leq t+\sum_{i=1}^{t}n_i$;

\item[{(b)}] if $G\neq \dot{\cup}_{i=1}^{t}K_{n_i}$, then there exists a non-free vertex $v\in V(G)$ such that
\begin{enumerate}
\item[{(1)}] $\psi(G-v)\geq \psi(G)$, and
\item[{(2)}] $\psi(G_v)\geq \psi(G)$, and
\item[{(3)}] $\psi(G_{v}-v)\geq \psi(G)-1$.
\end{enumerate}
\end{enumerate}
}
\end{Definition}

\par We also use the following lemma from \cite{Kumar}. In the following, $iv(G)$ denotes the number of non-free vertices of a graph $G$.
  
\begin{Lemma}{\cite[Lemma~3.4]{Kumar}}\label{simplicial}
Let $G$ be a graph and $v$ be a non-free vertex of $G$. Then, 
$\max\{iv(G_v), iv(G-v), iv(G_{v}-v)\}< iv(G)$.
\end{Lemma}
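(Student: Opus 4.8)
The statement to prove is Lemma 3.4 from \cite{Kumar}, namely that for a non-free vertex $v$ of $G$ we have $\max\{iv(G_v), iv(G-v), iv(G_v-v)\} < iv(G)$.

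\textbf{Proof strategy.} The plan is to show each of the three graphs $G_v$, $G-v$, and $G_v-v$ has strictly fewer non-free vertices than $G$. Since $iv$ counts non-free vertices, I need to track how the simplicial/free status of each vertex changes under the two operations: adding all edges among $N_G(v)$ (producing $G_v$), and deleting $v$.

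First I would handle $G_v$. Adding edges among the neighbors of $v$ can only enlarge neighborhoods and fill in complete graphs, so intuitively a free vertex of $G$ should remain free in $G_v$; more importantly, $v$ itself becomes free in $G_v$ by construction (as already observed in the excerpt: $v$ is a free vertex of $G_v$). So the key claims are: (i) every vertex that is free in $G$ is free in $G_v$, and (ii) $v$, which is non-free in $G$, is free in $G_v$. Claim (ii) is immediate. For (i), let $u$ be free in $G$; I must check $N_{G_v}(u)$ induces a complete graph in $G_v$. If $u \notin N_G(v)$ then $N_{G_v}(u) = N_G(u)$ and the induced subgraph on $N_G(u)$ only gains edges in $G_v$, so it stays complete. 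If $u \in N_G(v)$, then possibly $v$ gets added to... wait, no: $u$'s neighborhood doesn't change just because $u\in N_G(v)$ — edges are added among pairs in $N_G(v)$, so $u$ may gain new neighbors (the other vertices of $N_G(v)$), giving $N_{G_v}(u) = N_G(u) \cup (N_G(v)\setminus\{u\})$. I need this enlarged set to induce a complete graph in $G_v$. Since $u$ is free in $G$ and $v \in N_G(u)$, we have $N_G(v)\setminus\{u\} \subseteq N_G(u)$ (because $N_G(u)$ induces a complete graph containing $v$, so all of $v$'s... hmm, actually that gives: every neighbor of $v$ that lies in $N_G(u)$ — but we want all of $N_G(v)\setminus\{u\}$). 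Let me reconsider: $u$ free, $v\in N_G(u)$. The induced subgraph on $N_G(u)$ is complete. Does $v$ being in this complete graph force $N_G(v)\setminus\{u\}\subseteq N_G(u)$? Not obviously. So this needs care — this is the subtle point. Actually the cleaner route: a vertex $u$ is free in $G$ iff it belongs to exactly one maximal clique of $G$; and one should argue that $G_v$'s maximal cliques relate nicely, or just directly verify that adding the clique on $N_G(v)$ doesn't destroy simpliciality of any already-simplicial vertex, while it does make $v$ simplicial.

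Next, $G - v$: I would argue that every vertex free in $G$ (other than $v$) remains free in $G-v$ — deleting a vertex can only shrink neighborhoods, and the induced subgraph on $N_{G-v}(u) = N_G(u)\setminus\{v\}$ is an induced subgraph of a complete graph, hence complete. And since $v$ itself is gone, $iv(G-v) \le iv(G) - [v \text{ non-free in } G] = iv(G)-1 < iv(G)$. Actually wait — I need strictness, and $v$ being non-free and removed gives it directly, provided no previously-free vertex becomes non-free, which I just argued. For $G_v - v$: combine the two — in $G_v$ every free vertex of $G$ stays free and $v$ is free, then deleting $v$ from $G_v$: again free vertices stay free and $v$ is removed. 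So $iv(G_v - v) \le iv(G_v) \le iv(G) - 1$... no wait, $iv(G_v) \le iv(G) - 1$ since $v$ went from non-free to free and nothing else got worse, and then $iv(G_v - v) \le iv(G_v)$. Either way we get strict inequality.

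\textbf{Main obstacle.} The crux is proving that adding the clique on $N_G(v)$ preserves freeness of every vertex that was already free in $G$ (the $G_v$ case), because a vertex $u$ free in $G$ with $u \in N_G(v)$ has its neighborhood strictly enlarged, and one must show the larger neighborhood still induces a complete graph in the modified graph $G_v$. I expect this follows from analyzing maximal cliques: a free vertex lies in a unique maximal clique $C$, and if $v\in C$ then $N_G(v) \supseteq$ ... hmm, actually if $u$ is free with unique maximal clique $C \ni u$, and $v \in N_G(u) \subseteq C$, then is $N_G(v) \subseteq C$? No. So instead I think the right statement is weaker and one shows directly: the new neighbors of $u$ in $G_v$ are exactly $N_G(v)\setminus(N_G(u)\cup\{u\})$... and one must show these are all pairwise adjacent in $G_v$ and adjacent to everything in $N_G(u)$ in $G_v$. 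Pairwise adjacency among $N_G(v)$ elements holds in $G_v$ by construction. Adjacency between a new neighbor $w \in N_G(v)$ and an old neighbor $z \in N_G(u)$: since $u$ free and $v,z \in N_G(u)$ with edge $\{v,z\}$ present (as $N_G(u)$ complete)... this means $z \in N_G(v)$, so $\{w,z\}$ with both in $N_G(v)$ is an edge of $G_v$. That closes it! So the argument does work, just needs this observation that for a free vertex $u$, any $z\in N_G(u)$ that is also adjacent to $v$ lies in $N_G(v)$ — trivially true — and any $z \in N_G(u)$ with $v\in N_G(u)$ satisfies $\{v,z\}\in E(G)$ hence $z\in N_G(v)$. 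I would write this out cleanly, then the $G-v$ and $G_v-v$ cases follow quickly, and combine with the fact that $v$ is non-free in $G$ but free (or absent) in all three graphs to get the strict inequalities.
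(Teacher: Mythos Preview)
The paper does not prove this lemma; it is quoted verbatim as \cite[Lemma~3.4]{Kumar} and used as a black box, so there is no in-paper argument to compare against.

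Your argument is correct. The case $iv(G-v)<iv(G)$ is clean: deleting $v$ shrinks each $N_G(u)$ to $N_G(u)\setminus\{v\}$, which remains a clique, so no free vertex becomes non-free, and the non-free vertex $v$ disappears. The case $iv(G_v)<iv(G)$ hinges, as you identified, on showing that a free vertex $u\in N_G(v)$ stays free in $G_v$. Your final observation is exactly the right one and can be stated more crisply: since $u$ is free and $v\in N_G(u)$, every $z\in N_G(u)\setminus\{v\}$ is adjacent to $v$, so $N_G(u)\setminus\{v\}\subseteq N_G(v)$; hence $N_{G_v}(u)=(N_G(v)\setminus\{u\})\cup\{v\}$, which is a clique in $G_v$ because $N_G(v)$ is made into a clique and $v$ is adjacent to all of $N_G(v)$. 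Together with $v$ becoming free in $G_v$ this gives the strict drop. The case $iv(G_v-v)<iv(G)$ then follows by composing the two previous arguments, as you say: free vertices of $G$ are free in $G_v$, and deleting $v$ from $G_v$ preserves their freeness while removing the (originally non-free) vertex $v$.

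One small point worth tightening in a clean write-up: in the adjacency check for $G_v$, your phrase ``an old neighbor $z\in N_G(u)$'' should treat the subcase $z=v$ separately (there the needed edge $\{v,w\}$ comes from $w\in N_G(v)$ directly); the argument still goes through, but the sentence as written silently assumes $z\neq v$.
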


\par \medskip
The following theorem provides a general lower bound for the depth of binomial edge ideals. 
 
\begin{Theorem}\label{dcompatible}
Let $G$ be a graph on $[n]$ and $\psi$ a $d$-compatible map. Then 
$$\depth S/J_G\geq \psi(G).$$
\end{Theorem}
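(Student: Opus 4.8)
The plan is to prove the statement by induction on the number of non-free vertices $iv(G)$, using the two structural tools that have been set up: the inductive properties of a $d$-compatible map (Definition \ref{psi}) and a short exact sequence relating $S/J_G$ to $S/J_{G-v}$, $S/J_{G_v}$ and $S/J_{G_v-v}$. The base case is when $iv(G)=0$, i.e. every vertex of $G$ is free; a graph all of whose vertices are simplicial is exactly a disjoint union of complete graphs, say $G=\dot\cup_{i=1}^t K_{n_i}$. In that case $J_G$ is (the sum over components of) a determinantal ideal of $2\times n_i$ generic matrices, and it is classical that $S/J_{K_m}$ is Cohen--Macaulay with $\depth S/J_{K_m} = m+1$ (coming from the known dimension $n+1$ and Cohen--Macaulayness of $2$-minors of a $2\times m$ matrix). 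Taking a tensor product over the $t$ components, $\depth S/J_G = \sum_{i=1}^t(n_i+1) = t + \sum_{i=1}^t n_i$, which by condition (a) of Definition \ref{psi} is $\geq \psi(G)$, settling the base case.

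For the inductive step, assume $iv(G)\geq 1$, so $G$ is not a disjoint union of complete graphs, and pick the non-free vertex $v\in V(G)$ guaranteed by condition (b) of Definition \ref{psi}, so that $\psi(G-v)\geq \psi(G)$, $\psi(G_v)\geq \psi(G)$, and $\psi(G_v-v)\geq \psi(G)-1$. By Lemma \ref{simplicial}, each of the graphs $G-v$, $G_v$, $G_v-v$ has strictly fewer non-free vertices than $G$, so the induction hypothesis applies and gives $\depth S/J_{G-v}\geq \psi(G)$, $\depth S/J_{G_v}\geq \psi(G)$ and $\depth S/J_{G_v-v}\geq \psi(G)-1$. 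The remaining ingredient is an exact sequence of $S$-modules of the form
\[
0 \to S/J_G \to S/J_{G_v} \oplus S/J_{G-v} \to S/J_{G_v-v} \to 0
\]
(or a close variant), which is essentially Ohtani's decomposition $J_G = J_{G_v}\cap ((x_v,y_v)+J_{G-v})$ together with the observation that $J_{G_v} + (x_v,y_v) + J_{G-v}$ has the same radical/defines the same quotient as $J_{G_v-v}$ after passing modulo $(x_v,y_v)$; one has to be a little careful and likely needs $(x_v,y_v)+J_{G-v} = (x_v,y_v)+J_{G_v-v}$ and the colon/sum identities to make the Mayer--Vietoris-type sequence literally exact.

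From this short exact sequence, the Depth Lemma (the standard long exact sequence in local cohomology applied to $H^i_{\mathfrak m}(-)$, or equivalently the ``depth lemma'' for short exact sequences) yields
\[
\depth S/J_G \geq \min\{\depth(S/J_{G_v}\oplus S/J_{G-v}),\ \depth S/J_{G_v-v} + 1\}.
\]
Since $\depth(M\oplus N) = \min\{\depth M, \depth N\}$, the first term is $\min\{\depth S/J_{G_v},\ \depth S/J_{G-v}\}\geq \psi(G)$, and the second term is $\geq (\psi(G)-1)+1 = \psi(G)$. Hence $\depth S/J_G\geq \psi(G)$, completing the induction.

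The main obstacle I anticipate is establishing the exact sequence in precisely the right form: Ohtani's lemma gives the intersection decomposition of $J_G$, but turning it into a short exact sequence with cokernel exactly $S/J_{G_v-v}$ requires the identity $J_{G_v} + (x_v,y_v) + J_{G-v} = (x_v,y_v) + J_{G_v-v}$ (as ideals, not just up to radical), which should follow because modulo $(x_v,y_v)$ the neighbors of $v$ become a clique in $G_v-v$ and the edges at $v$ disappear — but one must check no extra generators sneak in. Everything else (the CM base case, Lemma \ref{simplicial} controlling the induction parameter, the depth lemma) is routine once that sequence is in place.
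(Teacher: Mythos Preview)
Your proof is correct and follows essentially the same approach as the paper: induction on $iv(G)$, the base case via the known depth of a disjoint union of complete graphs, and the inductive step via Ohtani's decomposition $J_G = J_{G_v}\cap((x_v,y_v)+J_{G-v})$, the resulting short exact sequence, and the depth lemma combined with Lemma~\ref{simplicial}. Your only hesitation---the identity $J_{G_v}+(x_v,y_v)+J_{G-v}=(x_v,y_v)+J_{G_v-v}$---is indeed straightforward (reducing $J_{G_v}$ modulo $(x_v,y_v)$ kills the generators through $v$ and leaves exactly $J_{G_v-v}$, which contains $J_{G-v}$), and the paper simply states the sequence without further comment.
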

\begin{proof}
We prove the assertion by using induction on $iv(G)$. If $iv(G)=0$, then $G$ is a disjoint union of complete graphs, that is,  $G=\dot{\cup}_{i=1}^{t}K_{n_i}$, where $n_i\geq 1$ for every $1\leq i\leq t$. We have $\depth S/J_G=t+\sum_{i=1}^{t}n_i$, by \cite[Theorem~1.1]{EHH}. On the other hand, by condition $(a)$ of Definition \ref{psi} we have $\psi(G)\leq t+\sum_{i=1}^{t}n_i$, so that the assertion holds in this case. Now, we  assume that $iv(G)>0$. Let $v\in [n]$ be a non-free vertex with the properties of condition $(b)$ of Definition~\ref{psi}. 
\par By \cite[Lemma~4.8]{O}, we have $J_G=J_{G_v}\cap((x_v,y_v) + J_{G-v})$. Therefore, the short exact sequence
\[
0\longrightarrow  \dfrac{S}{J_G}\longrightarrow \dfrac{S}{J_{G_v}}\oplus \dfrac{S_v}{J_{G-v}} \longrightarrow  \dfrac{S_v}{J_{{G_v}-v}}\longrightarrow 0
\]
is induced, where $S_v=\KK[x_i,y_i:i\in[n]\backslash \{v\}]$. 
\par Now, the well-known depth lemma implies that
\begin{equation}\label{exact}
\depth S/J_G\geq \min\{\depth S/J_{G_v}, \depth S_v/J_{G-v}, \depth S_v/J_{G_v-v}+1\}.
\end{equation}
Moreover, by Lemma \ref{simplicial}, induction hypothesis and by Definition~\ref{psi} part $(b)$, we have
\begin{equation}\label{first}
\depth S/J_{G_v}\geq \psi(G_v)\geq \psi(G),
\end{equation}
\begin{equation}\label{second}
\hspace{0.04cm}\depth S_v/J_{G-v}\geq \psi(G-v)\geq \psi(G)
\end{equation}
and
\begin{equation}\label{third}
\hspace{0.9cm}\depth S_v/J_{G_v-v}\geq \psi(G_{v}-v)\geq \psi(G)-1.
\end{equation}
So, \eqref{exact} together with \eqref{first}, \eqref{second},   and \eqref{third} imply the result.
\end{proof} 

 
Now, we are going to provide a combinatorial $d$-compatible map. Before that, we need to recall the concept of \emph{diameter} of a connected graph. Let $G$ be a connected graph on $[n]$, $u$ and $v$ be two vertices of $G$. Then, by the \emph{distance} between the vertices $u$ and $v$ in $G$, which is denoted by $d_G(u,v)$, we mean the length of a shortest path connecting $u$ and $v$ in $G$. Now, the \emph{diameter} of $G$, denoted by $\diam(G)$, is defined as
$$\diam(G)=\max\{d_G(u,v): u,v \in V(G)\}.$$
We call a shortest path between two vertices $u$ and $v$ of $G$  with $d_G(u,v)=\diam(G)$, an \emph{LSP} in $G$.
\par Let $G$ be a graph on $[n]$ with the connected components $G_1,\ldots,G_t$. Then we set 
$$d(G):=\mathfrak{i}(G)+\sum_{i=1}^{t}\diam(G_i),$$
where $\mathfrak{i}(G)$ denotes the number of isolated vertices of $G$.

\par Now, in the next theorem, we provide a $d$-compatible map given by $\xi(G)$. Here, $f(G)$ denotes the number of free vertices of $G$.

\begin{Theorem}\label{xi}
The map $\xi:\mathcal{G}\longrightarrow \NN_0$ defined by
$$\xi(G)=f(G)+d(G),~\text{for~every}~G\in\mathcal{G}$$
is $d$-compatible.
\end{Theorem}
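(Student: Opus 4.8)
The plan is to verify the two defining conditions of a $d$-compatible map for $\xi$. Condition (a) is immediate: if $G=\dot\cup_{i=1}^t K_{n_i}$, then every vertex is free, so $f(G)=\sum_{i=1}^t n_i$; moreover $\diam(K_{n_i})=1$ when $n_i\geq 2$ while a $K_1$ contributes an isolated vertex counted once in $\mathfrak{i}(G)$, so in all cases $d(G)=t$, giving $\xi(G)=t+\sum_{i=1}^t n_i$, which satisfies (a) with equality. The substance of the proof is condition (b), so I assume $G$ is not a disjoint union of complete graphs; then $G$ has at least one non-free vertex. I first reduce to the connected case: if $v$ is a non-free vertex lying in a connected component $H$ of $G$, then passing from $G$ to $G-v$, $G_v$, or $G_v-v$ only alters $H$ (the operation $G_v$ adds edges among $N_G(v)\subseteq V(H)$), so $\xi(G)-\xi(H)=\xi(G-v)-\xi(H-v)$ and likewise for the other two operations, since the contributions of the other components to $f$ and $d$ are untouched. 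Hence it suffices to find, in each connected non-complete graph $H$ on $\geq 2$ vertices, a non-free vertex $v$ satisfying (b)(1)--(3) with $\xi$ replaced by $f+\diam$ (note $d(H)=\diam(H)$ when $H$ is connected and non-trivial).

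The key idea for choosing $v$ is to take an endpoint of a longest shortest path (an LSP, in the paper's terminology). Write $d=\diam(H)$ and fix an LSP $P: v=u_0, u_1,\ldots,u_d$. The candidate vertex is $v=u_0$, which is a leaf-like endpoint of a diametral path; I will need to check it is non-free — if $u_0$ happened to be free I would instead argue that some vertex along or near the LSP works, but the natural situation is that $u_0$ has $u_1$ as essentially its only "useful" neighbor and the induced graph on $N_H(u_0)$ need not be complete, so $u_0$ is typically non-free; handling the exceptional case where every LSP-endpoint is free is one technical point to dispatch carefully. Granting $v=u_0$ non-free, I then estimate the three quantities. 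For $G_v$: adding edges among $N_H(v)$ can only turn non-free vertices free or shorten distances, but crucially $v$ becomes free in $G_v$ and the path $u_1,\ldots,u_d$ still witnesses $\diam(G_v)\geq d-1$; combined with a count showing $f(G_v)\geq f(H)+1$ (the vertex $v$ is newly free, and no previously free vertex becomes non-free when edges are added) one gets $\xi(G_v)=f(G_v)+\diam(G_v)\geq (f(H)+1)+(d-1)=\xi(H)$. For $G_v-v$: here $v$ is deleted, but the neighbors of $v$ have been made into a clique, which tends to preserve the free vertices of $H$ other than $v$ and also tends to keep diametral paths intact (the subpath $u_1,\ldots,u_d$ survives, giving $\diam(G_v-v)\geq d-1$); the bound to prove is $\xi(G_v-v)\geq \xi(H)-1$, i.e. $f(G_v-v)+\diam(G_v-v)\geq f(H)+d-1$, which follows once one shows $f(G_v-v)\geq f(H)-1$ (we may lose $v$ but gain nothing worse). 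For $G-v$: deleting the LSP-endpoint $v$ leaves $u_1$ as an endpoint of the path $u_1,\ldots,u_d$, so $\diam(G-v)\geq d-1$ in general, and in fact one expects $\diam(G-v)\geq d$ or a compensating gain of a free vertex — this is the delicate inequality $\xi(G-v)\geq \xi(H)$.

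The main obstacle, and where I expect to spend the most care, is precisely the $G-v$ inequality: deleting a vertex can simultaneously decrease the diameter (by $1$, if all diametral paths pass through $v$) and decrease the number of free vertices, so a crude bound only gives $\xi(G-v)\geq \xi(H)-1$, which is not good enough. The resolution must exploit the choice of $v$ as an LSP-endpoint: if $v=u_0$ is non-free then $N_H(v)$ is not a clique, so $v$ has at least two neighbors $a,b$ with $\{a,b\}\notin E(H)$; I would argue that either (i) deleting $v$ does not drop the diameter — because some diametral path avoids $v$, or because a new longer shortest path appears between $a$ and $b$ or between their far ends — so $\diam(G-v)\geq d$ and it remains only to check $f(G-v)\geq f(H)$, which holds because the only vertices whose free/non-free status can change upon deleting $v$ are the neighbors of $v$, and an LSP-endpoint's neighbors are "interior" enough that removing $v$ makes each of them free or leaves its status unchanged; or (ii) if the diameter does drop to $d-1$, then one shows a free vertex is gained, again by a local analysis at $N_H(v)$. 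Carrying out this case analysis cleanly — pinning down exactly when $\diam(G-v)=d-1$ can occur for an LSP-endpoint $v$ and showing it forces $f(G-v)>f(H)$ — is the crux; once it is settled, conditions (b)(1)--(3) all follow and the theorem is proved.
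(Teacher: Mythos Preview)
Your choice of $v$ as an LSP endpoint is the wrong move, and the case you dismiss as ``exceptional'' is in fact the generic one. In any tree, or in the graphs of Figures~\ref{sharp} and~\ref{new}, the endpoints of every LSP are free vertices; more broadly, an LSP endpoint $u_0$ is free whenever all its neighbours lie at distance $d-1$ from $u_d$ and are mutually adjacent, which is extremely common. So your proposal does not even produce a candidate non-free vertex in many basic examples, and the ``technical point to dispatch'' is really the whole problem. The paper takes the opposite tack: it looks at the \emph{interior} vertices $v_2,\ldots,v_d$ of an LSP $v_1,\ldots,v_{d+1}$, which are automatically non-free, and then does a structured case analysis---first disposing of the easy case where some non-free vertex misses an LSP, then (when every non-free vertex lies on every LSP) choosing $v_j$ according to the least index $j$ with $N_G(v_j)\cap N_G(v_{j+1})\neq\emptyset$, exploiting the fact that deleting an interior LSP vertex disconnects the graph so that $d(G-v_j)$ becomes a sum over components.

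There are also two smaller slips. First, ``the path $u_1,\ldots,u_d$ witnesses $\diam(G_v)\geq d-1$'' is backwards: exhibiting a path of length $d-1$ bounds the distance between $u_1$ and $u_d$ from \emph{above}, not the diameter from below. The paper instead shows, for any non-free $v$ and any diametral pair $\alpha,\beta$, that every $\alpha\beta$-path in $G_v$ has length at least $d-1$, by analysing how many neighbours of $v$ it can meet. Second, your bound $f(G_v-v)\geq f(H)-1$ is too weak (combined with $\diam(G_v-v)\geq d-1$ it only gives $\xi(G_v-v)\geq\xi(H)-2$) and your justification ``we may lose $v$'' presupposes $v$ is free; the correct bound is $f(G_v-v)\geq f(H)$, which follows from Lemma~\ref{simplicial} once $v$ is non-free. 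Notably, the paper proves that conditions (b)(2) and (b)(3) hold for \emph{every} non-free vertex, so all the care in choosing $v$ goes into condition (b)(1)---precisely the place where your argument is least developed.
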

\begin{proof}
Let $G\in \mathcal{G}$. First assume that $G=\dot{\cup}_{i=1}^{t}K_{n_i}$, where $n_i\geq 1$ for every $1\leq i\leq t$. Clearly we have $f(G)= \sum_{i=1}^{t}n_i$ and $d(G)=t$, and hence $\xi(G)=t+\sum_{i=1}^{t}n_i$. Therefore, we just need to show that $\xi$ satisfies condition $(b)$ of Definition \ref{psi}. To do so, without loss of generality we assume that $G$ is a non-complete connected graph. Therefore, we have $d(G)=\diam(G)$. For convenience, we set $d=d(G)$. Notice that $d\geq 2$, since $G$ is not a complete graph.
\par First we show that there exists a non-free vertex  $v\in [n]$ such that $\xi(G-v)\geq \xi(G)$. It is clear that for any non-free vertex $v$, we have $f(G-v)\geq f(G)$. If there exists a non-free vertex $v\in [n]$ such that $v$ does not belong to the vertex set of some LSP in $G$, then $v$ is certainly a vertex with the desired property. So, we may assume that every non-free vertex of $G$ belongs to the vertex set of every LSP in $G$. Let $P\hspace{-1.2mm}:v_1,v_2, \ldots ,v_{d+1}$ be an arbitrary LSP in $G$. Notice that $v_i$ is a non-free vertex of $G$ for every $2\leq i \leq d$. Also, every vertex $v$ of $G$ which is not on the path $P$ is a free  vertex of $G$. These imply that for every $2\leq j \leq d$, the vertices $v_{j-1}$ and $v_{j+1}$ belong to two different connected components of the graph $G-v_j$. Indeed, assume on contrary that there exists a path $P'\hspace{-1.2mm}:v_{j-1},u_1,\ldots,u_t,v_{j+1}$ in the graph $G-v_j$. Without loss of generality we may assume that $P'$ is an induced path in $G-v_j$. This clearly implies that $\{u_1,\ldots,u_t\}\subseteq V(P)\setminus \{v_j\}$. So, we have that  $u_1=v_{j-2},u_2=v_{j-3},\ldots,u_t=v_{j-t-1}$. Thus, $\{v_{j-t-1},v_{j+1}\}\in E(G)$, a contradiction.
\par Now, let $2\leq j \leq d$, and $\{n_j,v_j\}\in E(G)$, where $n_j\in [n]\setminus V(P)$. By a similar argument one could see that $d_{G-v_{j+1}}(n_j,v_1)=j$, if $\{n_j,v_{j-1}\}\notin E(G)$, and $d_{G-v_j}(n_j,v_1)=j-1$, if $\{n_j,v_{j-1}\}\in E(G)$. Also, we have $d_{G-v_{j-1}}(n_j,v_{d+1})=d-j+2$, if $\{n_j,v_{j+1}\}\notin E(G)$, and $d_{G-v_j}(n_j,v_{d+1})=d-j+1$, if $\{n_j,v_{j+1}\}\in E(G)$. We use these facts throughout the proof. 
\par Now, assume that there exists $2\leq i\leq d$ such that $N_G(v_i)\cap N_G(v_{i+1})\neq\emptyset$. Let $j=\min \{i: 2\leq i\leq d, N_G(v_i)\cap N_G(v_{i+1})\neq\emptyset\}.$ We claim that $v_j$ is a vertex with the desired property. To prove the claim, we distinguish the following cases:
\par First assume that $j>3$. Let $n_j\in N_G(v_j)\cap N_G(v_{j+1})$. We have $d_{G-v_j}(n_j,v_{d+1})=d-j+1$. Now, if $v_{j-1}$ is a free vertex of $G-v_j$, we have $f(G-v_j)\geq f(G)+1$. On the other hand, we have that $d(G-v_j)\geq j-2+d_{G-v_j}(n_j,v_{d+1})=d-1$, since the vertices $v_{j-1}$ and $v_{j+1}$ are contained in two connected components of the graph $G-v_j$. So, we get $f(G-v_j)+d(G-v_j)\geq f(G)+1+d-1=f(G)+d$, as desired. Now, if $v_{j-1}$ is a non-free vertex of $G-v_j$, then,  there exists $n_{j-1}\in N_{G-v_j}(v_{j-1})$ such that $n_{j-1}$ is not on the path $P$. Since  $j-2\geq 2$, by the choice of $j$, we have that $\{n_{j-1},v_{j-2}\}\notin E(G)$. This implies that $d_{G-v_j}(v_1,n_{j-1})=j-1$, and hence $d(G-v_j)\geq j-1+d_{G-v_j}(n_j,v_{d+1})=d$, which proves the claim in this case.
\par Next suppose that $j=3$. Let $n_3\in N_G(v_3)\cap N_G(v_{4})$. If  $v_2$ is a free vertex of $G-v_3$, then the claim follows by the same argument of the previous case. If $v_2$ is a non-free vertex of $G-v_3$, there exists  an induced path $P''\hspace{-1.2mm}:\alpha,v_2,\beta$ in $G-v_3$. Therefore $d(G-v_3)\geq 2+d_{G-v_3}(n_3,v_{d+1})=d$. This implies the claim.
\par Finally, suppose that $j=2$, and let $n_2\in N_G(v_2)\cap N_G(v_{3})$. Then $d_{G-v_2}(n_2,v_{d+1})=d-1$. Therefore, since $v_1$ and $v_3$ belong to two connected components of $G-v_2$, we get $d(G-v_2)\geq 1+d-1=d$, as desired.
\par Now, we assume that $N_G(v_i)\cap N_G(v_{i+1})=\emptyset$, for every $2\leq i\leq d$. We show that $v_2$ is a vertex with the desired property. First, notice that either $v_3$ is a free vertex of $G-v_2$, or there exists a vertex $n_3$ such that $\{n_3,v_3\}\in E(G)$ and $\{n_3,v_4\}\notin E(G)$.  This implies that either $f(G-v_2)\geq f(G)+1$ which completes the proof in this case or $d_{G-v_2}(n_3,v_{d+1})=d-1$.
\par \vspace{.5cm}
Now, for the rest of the proof, we show that all non-free vertices of $G$ satisfy conditions $(2)$ and $(3)$ of Definition \ref{psi}. Suppose that $v$ is an arbitrary non-free vertex of $G$. 
\par First we prove that $\xi(G_v)\geq \xi(G)$. Notice that $f(G_v)\geq f(G)+1$, by Lemma~\ref{simplicial}. Therefore, the result follows if we show that $d(G_v)\geq d-1$. Let $\alpha$ and $\beta$ be two vertices of $G$ with $d_G(\alpha,\beta)=d$. It suffices to show that $d_{G_v}(\alpha,\beta)\geq d-1$. Assume on contrary that there exists a path $P\hspace{-1.2mm}:\alpha=u_1,u_2, \ldots ,u_{\ell+1}=\beta$ in $G_v$ with $\ell\leq d-2$. Without loss of generality we may assume that $P$ is an induced path in $G_v$. Now we consider the following cases:
\par First assume that $v\in V(P)$. This clearly implies that $v=\alpha$ or $v=\beta$, since $v$ is a free vertex of $G_v$. Therefore, $P$ is a path in $G$. So, we have that $d_G(\alpha,\beta)\leq d-2$, a contradiction.
\par Next assume that $v\notin V(P)$. Now, since $P$ is an induced path in $G_v$ and since $P$ is not a path in $G$, we get $|N_{G_v}(v)\cap V(P)|=2$. Therefore, $N_{G_v}(v)\cap V(P)=\{u_i,u_{i+1}\}$ for some  $1\leq i \leq \ell$. Now, the path $P'\hspace{-1.2mm}:\alpha=u_1,\ldots,u_i,v,u_{i+1},\ldots,u_{\ell+1}=\beta$ is a path in $G$ between $\alpha$ and $\beta$ with the length at most $d-1$, which is a contradiction.
\par So, we have $d_{G_v}(\alpha,\beta)\geq d-1$, as desired.
\par \vspace{.5cm} Finally, we show that $\xi(G_{v}-v)\geq \xi(G)-1$. To prove this, it is enough to show that $d(G_{v}-v)\geq d-1$, since  $f(G_{v}-v)\geq f(G)$ by Lemma \ref{simplicial}. Let $P\hspace{-1.2mm}:v_1,v_2, \ldots ,v_{d+1}$ be an LSP in $G$. We consider the following cases:
\par First assume that $v \neq v_1$ and $v\neq v_{d+1}$. We show that  $d_{G_{v}-v}(v_1,v_{d+1})\geq d-1$. Suppose on contrary that 
$d_{G_{v}-v}(v_1,v_{d+1})\leq d-2$. Therefore, there exists a path $P'$ in $G_{v}-v$ between the vertices $v_1$ and $v_{d+1}$ with the length at most $d-2$. We may also assume that $P'$ is an induced path in $G_{v}-v$. Since $v$ is a free vertex of $G_v$ and since $P'$ is not a path in $G$, the vertex $v$ has exactly two adjacent neighbours in $G$ on the path $P'$. This implies that $d_G(v_1,v_{d+1})\leq d-1$, a contradiction. 
\par Next without loss of generality we assume that $v=v_1$. Now, the result follows if we show that $d_{G_{v}-v}(v_2,v_{d+1})\geq d-1$. Assume on contrary that there exists an induced path $P''\hspace{-1.2mm}:v_2,u_1,\ldots,u_r=v_{d+1}$ in $G_{v}-v$ between the vertices $v_2$ and $v_{d+1}$ with the length $r$, where $r\leq d-2$. Now, we have either $N_{G_v}(v)\cap V(P'')=\{v_2\}$ or $N_{G_v}(v)\cap V(P'')=\{v_2,u_1\}$. Therefore, we get a path in $G$ between $v_1$ and $v_{d+1}$, with the  length at most $d-1$, and hence $d_G(v_1,v_{d+1})\leq d-1$, a contradiction.
\par So, we have  $d(G_{v}-v)\geq d-1$, as desired.

\end{proof}
Now, combining of Theorem \ref{dcompatible} and Theorem \ref{xi}, we get the following combinatorial lower bound for the depth of binomial edge ideals.
\begin{Theorem}\label{mixed}
Let $G$ be a graph on $[n]$. Then
$$\depth S/J_G\geq \xi(G).$$
In particular, if $G$ is connected, then 
$$\depth S/J_G\geq f(G)+\diam(G).$$
\end{Theorem}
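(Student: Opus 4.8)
The plan is to obtain Theorem \ref{mixed} as a direct corollary of the two preceding results, so the proof amounts to assembling what has already been proved. First I would invoke Theorem \ref{xi}, which states that the combinatorial map $\xi(G)=f(G)+d(G)$ is a $d$-compatible map in the sense of Definition \ref{psi}. Having a concrete $d$-compatible map available, I would then apply Theorem \ref{dcompatible} with $\psi=\xi$; its conclusion reads $\depth S/J_G\geq\psi(G)$, which in our case is precisely $\depth S/J_G\geq\xi(G)=f(G)+d(G)$. This establishes the first inequality of the theorem for an arbitrary graph $G$ on $[n]$.

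For the ``in particular'' clause I would unwind the definition of $d(G)$ in the connected case. If $G$ is connected with at least two vertices, then $G$ has a single connected component and no isolated vertices, so $d(G)=\diam(G)$ by definition, and the refined bound $\depth S/J_G\geq f(G)+\diam(G)$ is merely a rewriting of what was just proved. In the degenerate case where $G$ is a single vertex one has $d(G)=1$ while $\diam(G)=0$, so $d(G)\geq\diam(G)$ still holds (the bound is then trivially valid, since $\depth S/J_G=2$). Hence in every connected case $\xi(G)=f(G)+d(G)\geq f(G)+\diam(G)$, and the second displayed inequality follows.

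I do not expect any obstacle at this final stage: all of the substance sits in the two inputs. The genuine difficulty in Theorem \ref{dcompatible} is the induction on $iv(G)$, powered by the decomposition $J_G=J_{G_v}\cap((x_v,y_v)+J_{G-v})$, the depth lemma applied to the resulting short exact sequence, and the control of $iv$ under the three operations $G_v$, $G-v$, $G_v-v$ provided by Lemma \ref{simplicial}. The genuinely delicate part is the verification that $\xi$ satisfies condition $(b)$ of Definition \ref{psi}, i.e.\ the existence of a non-free vertex $v$ with $\xi(G-v)\geq\xi(G)$, $\xi(G_v)\geq\xi(G)$, and $\xi(G_v-v)\geq\xi(G)-1$; this is exactly the LSP-based case analysis carried out in the proof of Theorem \ref{xi}, where one tracks how the diameter and the free-vertex count change when deleting a vertex on, or adjacent to, a longest shortest path. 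Granting those two theorems, Theorem \ref{mixed} is a two-line deduction.
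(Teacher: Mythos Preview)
Your proposal is correct and matches the paper's approach exactly: the paper presents Theorem~\ref{mixed} immediately after Theorem~\ref{xi} with the one-line justification ``combining Theorem~\ref{dcompatible} and Theorem~\ref{xi},'' and gives no further proof. Your handling of the connected case via the definition of $d(G)$, including the degenerate single-vertex case, is appropriate and slightly more explicit than what the paper writes.
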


We would like to remark that the lower bound in Theorem \ref{mixed} could be tight. For instance, let $G$ be the graph illustrated in Figure \ref{sharp}. Then $f(G)=8$ and $\diam(G)=4$, and hence $\depth S/J_G\geq \xi(G)=12$, by Theorem~\ref{mixed}. On the other hand, the upper bound given in \eqref{upper} implies that $\depth S/J_G\leq |V(G)|+1=12$, since $\kappa(G)=1$. Therefore, we get $\depth S/J_G=12$. 
\begin{figure}[H]
\definecolor{qqqqff}{rgb}{0.,0.,1.}
\centering
\begin{tikzpicture}[scale=1.1,line cap=round,line join=round,>=triangle 45,x=1.0cm,y=1.0cm]
\draw (0.,0.)-- (1.,1.);
\draw (0.,0.)-- (1.,-1.);
\draw (1.,1.)-- (1.,0.);
\draw (1.,-1.)-- (1.,0.);
\draw (1.,0.)-- (2.,1.);
\draw (1.,0.)-- (2.,-1.);
\draw (2.,1.)-- (2.,-1.);
\draw (0.,0.)-- (-1.,1.);
\draw (0.,0.)-- (-1.,-1.);
\draw (-1.,1.)-- (-1.,0.);
\draw (-1.,-1.)-- (-1.,0.);
\draw (-2.,1.)-- (-1.,0.);
\draw (-1.,0.)-- (-2.,-1.);
\draw (-1.,0.)-- (0.,0.);
\draw (0.,0.)-- (1.,0.);
\draw (-2.,1.)-- (-2.,-1.);
\begin{scriptsize}
\draw [fill=qqqqff] (0.,0.) circle (1.5pt);
\draw [fill=qqqqff] (1.,1.) circle (1.5pt);
\draw [fill=qqqqff] (1.,0.) circle (1.5pt);
\draw [fill=qqqqff] (1.,-1.) circle (1.5pt);
\draw [fill=qqqqff] (2.,1.) circle (1.5pt);
\draw [fill=qqqqff] (2.,-1.) circle (1.5pt);
\draw [fill=qqqqff] (-1.,1.) circle (1.5pt);
\draw [fill=qqqqff] (-1.,0.) circle (1.5pt);
\draw [fill=qqqqff] (-1.,-1.) circle (1.5pt);
\draw [fill=qqqqff] (-2.,1.) circle (1.5pt);
\draw [fill=qqqqff] (-2.,-1.) circle (1.5pt);
\end{scriptsize}
\end{tikzpicture}
\vspace{1mm}
\caption{A graph $G$ with $\depth S/J_G=\xi(G)=n-\kappa(G)+2=12$.}
\label{sharp}
\end{figure}
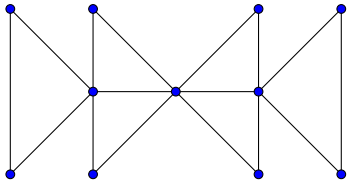

\par It is also remarkable that there are connected graphs $G$ on $[n]$ for which the lower bound given in Theorem \ref{mixed} is attained while $\depth S/J_G <n-\kappa(G)+2$. For instance, let $G$ be the graph depicted in Figure \ref{new} with $f(G)=\diam(G)=4$. Then, a routine computation with Macaulay2~\cite{GS} over the base field $\QQ$ shows that $\depth S/J_G=\xi(G)=8$, whereas $n-\kappa(G)+2=9$.

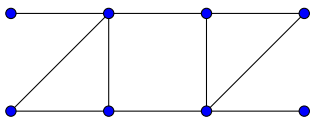
\begin{figure}[H]
\definecolor{qqqqff}{rgb}{0.,0.,1.}
\centering
\begin{tikzpicture}[scale=1.3,line cap=round,line join=round,>=triangle 45,x=1.0cm,y=1.0cm]
\draw (-2.,1.)-- (-2.,0.);
\draw (-2.,0.)-- (-3.,0.);
\draw (-2.,1.)-- (-1.,1.);
\draw (-1.,0.)-- (-2.,0.);
\draw (-2.,1.)-- (-3.,0.);
\draw (-2.,1.)-- (-3.,1.);
\draw (-1.,1.)-- (-1.,0.);
\draw (0.,1.)-- (-1.,1.);
\draw (0.,1.)-- (-1.,0.);
\draw (-1.,0.)-- (0.,0.);
\begin{scriptsize}
\draw [fill=qqqqff] (-2.,1.) circle (1.5pt);
\draw [fill=qqqqff] (-1.,1.) circle (1.5pt);
\draw [fill=qqqqff] (-2.,0.) circle (1.5pt);
\draw [fill=qqqqff] (-3.,0.) circle (1.5pt);
\draw [fill=qqqqff] (-1.,0.) circle (1.5pt);
\draw [fill=qqqqff] (0.,1.) circle (1.5pt);
\draw [fill=qqqqff] (-3.,1.) circle (1.5pt);
\draw [fill=qqqqff] (0.,0.) circle (1.5pt);
\end{scriptsize}
\end{tikzpicture}
\vspace{1.5mm}
\caption{A graph $G$ with $\depth S/J_G=\xi(G)=8<n-\kappa(G)+2=9$.}
\label{new}
\end{figure}

\par We should also remark that there are both Cohen-Macaulay and non Cohen-Macaulay graphs $G$ with $\depth S/J_G>\xi(G)$.  For example, for the illustrated graph $G$ in Figure \ref{CM} with $\xi(G)=5$, a computation with Macaulay2 \cite{GS} (see also \cite[Theorem~3.9]{KumarS}), shows that $S/J_G$ is Cohen-Macaulay, which yields that $\depth S/J_G=~6$. On the other hand, for the non Cohen-Macaulay graph $G$ depicted in Figure \ref{nice}, we have $\depth S/J_G=5>\xi(G)=2$, see Theorem \ref{main}. Moreover, there are Cohen-Macaulay and non Cohen-Macaulay graphs $G$ with $\depth S/J_G=\xi(G)$. Indeed, one could easily see that all graphs $G$ satisfying equivalent conditions of \cite[Theorem~3.1]{EHH} are Cohen-Macaulay with $\depth S/J_G=\xi(G)$. Also, the graph $G$ depicted in Figure~\ref{new} is non Cohen-Macaulay with $\depth S/J_G=\xi(G)=8$.
\begin{figure}[H]
\definecolor{qqqqff}{rgb}{0.,0.,1.}
\centering
\begin{tikzpicture}[scale=1.3,line cap=round,line join=round,>=triangle 45,x=1.0cm,y=1.0cm]
\draw (3.,2.)-- (2.,2.);
\draw (3.,2.)-- (4.,2.);
\draw (3.,2.)-- (4.,3.);
\draw (3.,2.)-- (3.,3.);
\draw (4.,2.)-- (4.,3.);
\draw (4.,3.)-- (3.,3.);
\begin{scriptsize}
\draw [fill=qqqqff] (2.,2.) circle (1.5pt);
\draw [fill=qqqqff] (3.,2.) circle (1.5pt);
\draw [fill=qqqqff] (4.,2.) circle (1.5pt);
\draw [fill=qqqqff] (4.,3.) circle (1.5pt);
\draw [fill=qqqqff] (3.,3.) circle (1.5pt);
\end{scriptsize}
\end{tikzpicture}
\vspace{1.7mm}
\caption{A Cohen-Macaulay graph $G$ with $\depth S/J_G=6>\xi(G)=5$.}
\label{CM}
\end{figure}
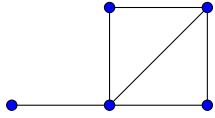

\par Now, combining Theorem \ref{mixed} together with \cite[Theorem~5.2]{RSK2} yields the following bound for the depth of binomial edge ideals.
\begin{Corollary}
Let $G$ be a graph on $[n]$ with $n\geq 3$. Then
\[
\depth S/J_G\geq \max\{4,\xi(G)\}.
\]
\end{Corollary}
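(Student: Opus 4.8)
The plan is to deduce the bound from the two facts already in hand: Theorem~\ref{mixed}, which gives $\depth S/J_G\geq\xi(G)$ for an arbitrary graph $G$, and \cite[Theorem~5.2]{RSK2}, which gives $\depth S/J_G\geq 4$ whenever $G$ is connected on at least three vertices. Since the first of these already accounts for the $\xi(G)$ term in $\max\{4,\xi(G)\}$ with no hypothesis on $G$, the whole task reduces to showing $\depth S/J_G\geq 4$; by \cite[Theorem~5.2]{RSK2} there is nothing further to do when $G$ is connected (this is where $n\geq 3$ enters). So the only case requiring an argument is $G$ disconnected.

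First I would reduce the disconnected case to the connected one through the behaviour of $S/J_G$ under disjoint unions of graphs. If $G_1,\dots,G_t$ are the connected components of $G$, on vertex sets $W_1,\dots,W_t$ respectively, then $J_G$ is the sum of the extensions of $J_{G_1},\dots,J_{G_t}$ to $S$, whence $S/J_G\cong (S_1/J_{G_1})\otimes_{\KK}\cdots\otimes_{\KK}(S_t/J_{G_t})$ with $S_i=\KK[x_v,y_v:v\in W_i]$. Since depth is additive on a tensor product of finitely generated $\mathbb{N}$-graded $\KK$-algebras, this gives $\depth S/J_G=\sum_{i=1}^{t}\depth S_i/J_{G_i}$.

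Since $G$ is disconnected we have $t\geq 2$, so it suffices to check $\depth S_i/J_{G_i}\geq 2$ for each component. If $G_i$ is a single vertex, then $S_i/J_{G_i}=\KK[x_v,y_v]$ has depth $2$; if $|W_i|=2$, then $G_i=K_2$ and $S_i/J_{G_i}$ is a hypersurface, of depth $3$; and if $|W_i|\geq 3$, then $G_i$ is connected on at least three vertices, so \cite[Theorem~5.2]{RSK2} gives $\depth S_i/J_{G_i}\geq 4$. In all cases $\depth S_i/J_{G_i}\geq 2$, hence $\depth S/J_G\geq 2t\geq 4$. Combining this with Theorem~\ref{mixed} (which applies to $G$ irrespective of connectivity) yields $\depth S/J_G\geq\max\{4,\xi(G)\}$, as desired.

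I do not expect any genuine obstacle here: the statement is a formal consequence of Theorem~\ref{mixed} and \cite[Theorem~5.2]{RSK2}. The only points that call for a little care are invoking the additivity of depth under a disjoint union of graphs, and disposing of the small components $K_1$ and $K_2$ by hand, since \cite[Theorem~5.2]{RSK2} is stated only for connected graphs on at least three vertices.
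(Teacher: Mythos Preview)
Your proof is correct and uses exactly the two ingredients the paper cites---Theorem~\ref{mixed} and \cite[Theorem~5.2]{RSK2}---so the approach is essentially the paper's. The paper offers no further argument; your extra care with the disconnected case via additivity of depth is sound, though a shorter route there is to observe that every connected component contributes at least $2$ to $\xi$ (an isolated vertex gives $1+1$, a $K_2$ gives $2+1$, and any larger component has $\diam\geq 2$ or is complete with $f\geq 3$), so $\xi(G)\geq 4$ whenever $G$ is disconnected and Theorem~\ref{mixed} alone already yields the bound $4$.
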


Moreover, in \cite[Theorem~5.3]{RSK2}, it was shown that for graphs $G$ with more than three vertices, $\depth S/J_G=4$ if and only if $G=G'\ast 2K_1$ for some graph $G'$. Therefore, we have:

\begin{Corollary}\label{combin}
Let $G$ be a graph on $[n]$ with $n\geq 4$ and $G\neq G'\ast 2K_1$ for every graph $G'$. Then 
\[
\depth S/J_G\geq \max\{5,\xi(G)\}.
\]
\end{Corollary}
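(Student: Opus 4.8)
The plan is to obtain this as a formal consequence of results already established: the lower bound $\depth S/J_G\geq\xi(G)$ from Theorem \ref{mixed}, the preceding Corollary, and the classification of depth-$4$ binomial edge ideals recorded above from \cite[Theorem~5.3]{RSK2}. No new homological computation is required; the argument is essentially bookkeeping.

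First I would apply the preceding Corollary to our graph $G$. Since $n\geq 4\geq 3$, that Corollary gives $\depth S/J_G\geq\max\{4,\xi(G)\}$. In view of Theorem \ref{mixed} it therefore suffices to prove the single strict improvement $\depth S/J_G\geq 5$ under the standing hypothesis that $G$ is not of the form $G'\ast 2K_1$. Suppose, to the contrary, that $\depth S/J_G\leq 4$. Together with the inequality $\depth S/J_G\geq 4$ just quoted, this forces $\depth S/J_G=4$. Because $n\geq 4$, we may invoke \cite[Theorem~5.3]{RSK2}, which asserts that for graphs on more than three vertices $\depth S/J_G=4$ holds precisely when $G=G'\ast 2K_1$ for some graph $G'$. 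This contradicts the hypothesis on $G$, and hence $\depth S/J_G\geq 5$. Combining this with $\depth S/J_G\geq\xi(G)$ from Theorem \ref{mixed} yields $\depth S/J_G\geq\max\{5,\xi(G)\}$, which is the claim.

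The only genuine care needed, rather than an obstacle, is the matching of hypotheses: the bound $n\geq 4$ in the statement is exactly the range in which the characterisation of \cite[Theorem~5.3]{RSK2} is valid, and the excluded family $\{G'\ast 2K_1\}$ in the hypothesis is verbatim the exceptional family of that theorem, so the contradiction step goes through cleanly. It may also be worth noting in passing that the maximum on the right-hand side is essential: the graph of Figure \ref{new} satisfies $\depth S/J_G=\xi(G)=8>5$, whereas the graph of Figure \ref{nice} has $\depth S/J_G=5>\xi(G)=2$, so neither term dominates the other in general.
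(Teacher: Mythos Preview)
Your proof is correct and follows exactly the route the paper takes: the corollary is stated immediately after recalling \cite[Theorem~5.3]{RSK2}, and the paper simply writes ``Therefore, we have:'' before the statement, leaving the reader to combine that characterisation with the preceding corollary and Theorem~\ref{mixed} precisely as you do.
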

As we saw in Figure \ref{sharp}, there are graphs $G$ for which $\depth S/J_G=\xi(G)>5$. There are also graphs $G$ for which $\depth S/J_G=5$ while $\xi(G)<5$. For instance, let $G$ be the graph shown in Figure \ref{nice}. We have $\xi(G)=2$. However, we will see in Theorem \ref{main} that $\depth S/J_G=5$. Our main goal in the rest of this paper is to characterize all graphs $G$ with $\depth S/J_G=5$.

\section{A poset associated with binomial edge ideals and a Hochster type formula}\label{hochster type}
In this section, continuing the topological approach from \cite{A} and \cite{RSK2}, we study the local cohomology modules of binomial edge ideals. To this end, we first associate a new poset, adapted to our needs, with binomial edge ideals as follows. 
\par Let $I$ be an ideal in the polynomial ring $S$ and $I=q_1\cap \cdots \cap q_t$ be an arbitrary decomposition for the ideal $I$. Then, by the poset $\mathcal{R}_I$, ordered by the reverse inclusion, we mean the poset of all possible sums of ideals in this decomposition, defined in \cite[Example~2.1]{ABZ}. Now, we know that $J_{G}=\bigcap\limits_{\substack{T\subseteq [n]}}P_T(G)$. We use $\mathcal{R}_{G}$, instead of $\mathcal{R}_{J_G}$, to denote the poset arose from the above decomposition of $J_G$.

\par Now, we define another poset associated with the binomial edge ideal of a graph $G$. 
\begin{Definition}\label{typo}
\em{Let $G$ be a graph on $[n]$. Associated with the decomposition $J_{G}=\bigcap\limits_{\substack{T\subseteq [n]}}P_T(G)$, we consider the poset $(\mathcal{M}_G,\preccurlyeq)$, ordered by reverse inclusion, which is made up of the following elements:
\begin{itemize}
\item 
the prime ideals in the poset $\mathcal{R}_{G}$,
\item 
the prime ideals in the posets $\mathcal{R}_{I}$, arising from the following type of decompositions:
\[
I=q_1\cap q_2 \cap \cdots \cap q_t \cap
(q_1+P_\emptyset(G)) \cap (q_2+P_\emptyset(G)) \cap \cdots \cap (q_t+P_\emptyset(G)),\] 
where $I$'s are the non-prime ideals in the poset $\mathcal{R}_{G}$ and 
$q_1, q_2,\ldots, q_t$ are the minimal prime ideals of $I$, and
\item
the prime ideals that we obtain repeatedly by this procedure every time that we find a non-prime ideal.
\end{itemize}
}
\end{Definition}
Note that using the non-minimal primary decomposition $J_{G}=\bigcap\limits_{\substack{T\subseteq [n]}}P_T(G)$ in Definition \ref{typo}, turns the poset $\mathcal{M}_G$ to be different from the posets $\mathcal{A}_G$ and $\mathcal{Q}_G$, considered by the authors in \cite{A} and \cite{RSK2}, respectively. We also notice that the significance of this new definition will be demonstrated in the proof of Theorem ~\ref{main} and Theorem \ref{corona}.  Furthermore, the following lemma, which is a direct consequence of \cite[Proposition~3.4]{RSK2}, guarantees that the process of the construction of the poset $\mathcal{M}_G$ terminates after a finite number of steps just like the construction process of the poset $\mathcal{A}_{G}$ as well as the poset $\mathcal{Q}_G$.

\begin{Lemma}\label{crucial}
Let $G$ be a graph on $[n]$. Then every element $q$ of the poset $\mathcal{M}_G$ is of the form $P_T(H)$, for some graph $H$ on $[n]$ and some $T\subseteq [n]$.
\end{Lemma}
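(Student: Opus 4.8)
The plan is to show that every element $q$ of $\mathcal{M}_G$ arises from the construction and has the shape $P_T(H)$ by induction on the number of ``rounds'' of the procedure described in Definition~\ref{typo}. The base case concerns the prime ideals in $\mathcal{R}_G$: these are sums of the ideals $P_{T_1}(G),\ldots,P_{T_k}(G)$ for various subsets $T_i\subseteq [n]$, and I would invoke \cite[Proposition~3.4]{RSK2} (as the excerpt explicitly does) to see that any such sum, whenever it is a prime ideal, can already be written as $P_T(H)$ for a suitable graph $H$ on $[n]$ and $T\subseteq[n]$. This handles the first bullet of Definition~\ref{typo}.

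For the inductive step I would analyze what happens when we encounter a non-prime ideal $I$ in a poset $\mathcal{R}_{I'}$ that has already been built. By the induction hypothesis $I=P_T(H)$ for some graph $H$ and some $T$, and one first observes that the minimal primes $q_1,\ldots,q_t$ of $I=P_T(H)$ are themselves of the form $P_T(H')$ — indeed they are $P_{T\cup S}(H)$-type ideals coming from the cut-point structure of $H-T$, equivalently the minimal primes of $J_{\widetilde H_1}+\cdots+\widetilde H_{c}+(x_v,y_v)_{v\in T}$, each of which is $P_{T'}(\cdot)$ by \cite[Corollary~3.9]{HHHKR}. Then the decomposition used to resolve $I$ involves the ideals $q_i$ and the ideals $q_i+P_\emptyset(G)$. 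Here $P_\emptyset(G)=J_{\widetilde G_1}+\cdots+J_{\widetilde G_{c_G(\emptyset)}}$ is itself a $P_\emptyset$-type ideal (a sum of binomial edge ideals of complete graphs), so $q_i+P_\emptyset(G)$ is again a sum of ideals each of the form $P_{T'}(\text{graph on }[n])$. Applying \cite[Proposition~3.4]{RSK2} once more to every prime ideal appearing in $\mathcal{R}_I$ — all of which are sums of such $P_{T'}$-type ideals — yields that each of them is of the form $P_T(H)$. Since the whole construction of $\mathcal{M}_G$ terminates after finitely many steps (which is precisely what the cited proposition also guarantees, bounding the process), every element obtained is $P_T(H)$-type.

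The key point, and the main obstacle, is making precise the closure statement: that the collection of ideals which are finite sums of ideals of the form $P_{T'}(\text{graph on }[n])$ is closed under (i) taking sums and (ii) passing to minimal primes, and that \emph{prime} ideals in this collection are already single $P_T(H)$'s. The sum-closure is immediate from the definition, and the minimal-primes closure reduces to the primary-decomposition description recalled in the Preliminaries together with \cite[Corollary~3.9]{HHHKR}; the ``a prime in this collection is a single $P_T(H)$'' step is exactly the content of \cite[Proposition~3.4]{RSK2}, which computes the sum $P_{T_1}(G)+\cdots+P_{T_k}(G)$ explicitly and identifies when it is prime. So the proof is really a bookkeeping argument: track that at every stage of the recursion in Definition~\ref{typo} we stay inside this closed class, and then read off the conclusion.

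A clean way to organize the writeup is: (1) define $\mathcal{P}$ to be the set of ideals expressible as a finite sum $\sum_j P_{T_j}(H_j)$ with each $H_j$ a graph on $[n]$; (2) note $J_G\in$ the intersection of members of $\mathcal{P}$ and all generators $P_T(G)\in\mathcal{P}$; (3) show $\mathcal{P}$ is closed under sums (trivial) and that $\Min$ of a member lies in $\mathcal{P}$ (via \cite[Theorem~3.2, Corollary~3.9]{HHHKR}); (4) show a prime member of $\mathcal{P}$ equals some $P_T(H)$ (via \cite[Proposition~3.4]{RSK2}); (5) conclude by induction on the recursion depth in Definition~\ref{typo} that every element of $\mathcal{M}_G$ is a prime member of $\mathcal{P}$, hence of the form $P_T(H)$, and that this depth is finite. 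I expect steps (3)--(4) to carry all the weight, with the rest being routine.
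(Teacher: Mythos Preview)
Your approach is essentially the paper's: the paper gives no argument at all beyond declaring the lemma ``a direct consequence of \cite[Proposition~3.4]{RSK2}'', and your sketch fleshes out exactly how that consequence is drawn, with the same key citation carrying the weight. The clean reformulation in your final paragraph (steps (1)--(5), tracking membership in the class $\mathcal{P}$ of finite sums of $P_{T_j}(H_j)$'s) is correct and is the right way to organize it.

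One slip to fix in your middle paragraph: you write ``By the induction hypothesis $I=P_T(H)$'' for a \emph{non-prime} ideal $I$, but every $P_T(H)$ is prime, so this is impossible as stated. What you need (and what your final reformulation correctly says) is that $I\in\mathcal{P}$, i.e.\ $I$ is of the form $(x_v,y_v)_{v\in T}+J_{H'}$ for some graph $H'$ on $[n]\setminus T$; then its minimal primes are $(x_v,y_v)_{v\in T}+P_{T'}(H')=P_{T\cup T'}(H^*)$ for suitable $H^*$ on $[n]$, which puts you back in $\mathcal{P}$. With that correction the induction goes through.
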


Now, by applying Lemma \ref{crucial} and thanks to the flexibility for the decomposition of the ideals in \cite[Theorem~5.22]{ABZ}, the following Hochster type decomposition formula for the local cohomology modules of binomial edge ideals is established by using the same argument that was applied in the proof of \cite[Theorem~3.9]{A}. We first need to fix a notation before stating the formula.

\par Let $1_{{\mathcal{M}_G}}$ be a terminal element that we add to the poset $\mathcal{M}_G$. Then, recall that for every $q\in \mathcal{M}_G$, by the interval $(q,1_{\mathcal{M}_G})$, one means the subposet
\[
\{z\in \mathcal{M}_G:
 q\precneqq z\precneqq 1_{{\mathcal{M}_G}}\}.
\]

\par 
\begin{Theorem}{\em(see} \cite[Theorem~3.9]{A} {\em and} \cite[Theorem~3.6]{RSK2}{\em .)}\label{Hochster}
Let $G$ be a graph on $[n]$. Then we have the $\KK$-isomorphism
\[
H_{\mathfrak{m}}^{i}(S/J_G)\cong\bigoplus_{q \in \mathcal{M}_G} H_{\mathfrak{m}}^{d_q}(S/q)^{\oplus M_{i,q}},
\]
where $d_q=\dim S/q$ and $M_{i,q}=\dim_{\KK} \widetilde{H}^{i-d_q-1}((q,1_{{\mathcal{M}_G}});\KK)$.
\end{Theorem}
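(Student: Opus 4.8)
\textbf{Proof proposal for Theorem~\ref{Hochster}.}
The plan is to reduce the statement to the general Hochster-type formula of \`Alvarez Montaner, namely \cite[Theorem~5.22]{ABZ} (the refinement of \cite[Theorem~3.9]{A}), applied to the decomposition of $J_G$ that defines the poset $\mathcal{M}_G$. The key point is that \cite[Theorem~5.22]{ABZ} does not require the decomposition used to build the poset to be a minimal primary decomposition; it only needs the ideals appearing in it, together with all their pairwise (and higher) sums, to admit a controlled structure so that the spectral sequence computing $H^i_{\mathfrak m}(S/I)$ degenerates. So the first step is to set up the decomposition explicitly: start from $J_G=\bigcap_{T\subseteq[n]}P_T(G)$, and then, as prescribed in Definition~\ref{typo}, whenever a sum of some of the $P_T(G)$'s fails to be prime, re-expand it using its minimal primes together with their sums with $P_\emptyset(G)$, iterating until only prime ideals remain. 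By Lemma~\ref{crucial} every ideal that ever arises in this process is of the form $P_T(H)$ for some graph $H$ on $[n]$, and in particular is a radical ideal with an explicitly understood primary decomposition; this is precisely the hypothesis needed to invoke \cite[Theorem~5.22]{ABZ}.

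Second, I would verify that the poset $\mathcal{M}_G$ of Definition~\ref{typo} is exactly the poset that \cite[Theorem~5.22]{ABZ} associates with this decomposition. Concretely, the theorem in \cite{ABZ} attaches to a decomposition a poset of sums of the participating ideals, ordered by reverse inclusion, and then, for each non-prime such sum, it allows one to refine the decomposition and enlarge the poset; the resulting object, after the process terminates, is by construction the poset $\mathcal{R}_I$-style gadget described in \cite[Example~2.1]{ABZ} and iterated. Matching this bookkeeping against Definition~\ref{typo} is essentially a definition-chase: the elements listed in the three bullet points of Definition~\ref{typo} are exactly the prime ideals produced by the \`Alvarez Montaner machine, and termination is guaranteed by the lemma cited right after Definition~\ref{typo} (a consequence of \cite[Proposition~3.4]{RSK2}).

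Third, with the poset identified, the formula
\[
H_{\mathfrak{m}}^{i}(S/J_G)\cong\bigoplus_{q \in \mathcal{M}_G} H_{\mathfrak{m}}^{d_q}(S/q)^{\oplus M_{i,q}},\qquad M_{i,q}=\dim_{\KK}\widetilde{H}^{i-d_q-1}\bigl((q,1_{\mathcal{M}_G});\KK\bigr),
\]
is then read off directly from \cite[Theorem~5.22]{ABZ}: the local cohomology of each $S/q$ with $q=P_T(H)$ is concentrated in the single degree $d_q=\dim S/q$ (since $S/P_T(H)$ is a tensor product of polynomial rings, hence Cohen–Macaulay, so $H^j_{\mathfrak m}(S/q)=0$ for $j\neq d_q$), which is what makes the double complex collapse and produces the clean direct sum with multiplicities given by the reduced cohomology of the open intervals $(q,1_{\mathcal{M}_G})$. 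The shift by $d_q+1$ in the cohomological index of the poset is the standard one coming from the Mayer–Vietoris/\v{C}ech–style spectral sequence, exactly as in \cite[Theorem~3.9]{A} and \cite[Theorem~3.6]{RSK2}.

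The main obstacle is the bookkeeping in the second step: one must check carefully that the iterative refinement in Definition~\ref{typo} — in particular the specific choice to re-expand a non-prime ideal $I$ using $q_1,\dots,q_t$ \emph{and} the ideals $q_j+P_\emptyset(G)$ — is a legitimate instance of the refinements allowed by \cite[Theorem~5.22]{ABZ}, i.e. that $I$ really does equal $\bigcap_j q_j\cap\bigcap_j(q_j+P_\emptyset(G))$ and that this does not disturb the Cohen–Macaulayness/radicality properties used in the degeneration argument. This is where Lemma~\ref{crucial} does the heavy lifting, and I would present the argument as: (i) each such $I$ is $P_T(H)$ for some $H$, (ii) its minimal primes are again of the form $P_{T'}(H')$, (iii) adding $P_\emptyset(G)=J_G$ to a prime $P_{T'}(H')$ keeps us inside the class of ideals $P_{T''}(H'')$, so the process stays within the hypotheses of \cite[Theorem~5.22]{ABZ} at every stage, and (iv) termination follows from the cited finiteness lemma. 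Everything else is a faithful transcription of the proof of \cite[Theorem~3.9]{A}.
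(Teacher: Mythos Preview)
Your proposal is correct and follows exactly the approach sketched in the paper, which itself does not give a detailed proof but simply invokes Lemma~\ref{crucial} together with the flexibility of the decomposition allowed in \cite[Theorem~5.22]{ABZ} and then points to the argument of \cite[Theorem~3.9]{A}. Two small slips worth correcting: $P_\emptyset(G)$ is a minimal prime of $J_G$, not $J_G$ itself, and $S/P_T(H)$ is a tensor product of determinantal rings (coordinate rings of rank-$1$ matrices) rather than polynomial rings, though your conclusion that it is Cohen--Macaulay---and hence that $H^j_{\mathfrak m}(S/q)$ is concentrated in degree $d_q$---is correct.
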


Note that the above theorem suggests an interesting and also a wonderful method to study the depth of binomial edge ideals. Indeed, instead of working directly with the minimal graded free resolution of $S/J_G$, which is almost an intractable task, we may elaborate the topological properties of the subposets $(q,1_{{\mathcal{M}_G}})$ of $\mathcal{M}_G$. In this approach, beside the algebraic tools, we also employ the topological tools.

\section{Combinatorial characterization of some binomial edge ideals in terms of their depth}\label{topology}
At the end of Section \ref{benefit}, we provided a lower bound for the depth of binomial edge ideals. We showed that for a graph $G$ with more than three vertices, $\depth S/J_G\geq \max\{5,\xi(G)\}$, if $G\neq G'\ast 2K_1$ for every graph $G'$. Now, our main goal in this section is to give a combinatorial characterization of graphs $G$ with $\depth S/J_G=5$. For this aim, we need to study topological properties of certain subposets of $\mathcal{M}_G$ to compute some local cohomology modules of $S/J_G$. 

\par First we need to state the following definition. 
\begin{Definition}\label{gpich}
\em{
Let $T\subseteq [n]$ with $|T|=n-2$. Associated with $T$, we introduce a family of graphs on $[n]$, denoted by $\mathcal{G}_T$, such that for each $G\in \mathcal{G}_T$, there exist two non-adjacent vertices $u$ and $w$ of $G$ with $u,w\in [n]\backslash T$, and three disjoint subsets of $T$, say  $V_0$, $V_1$ and $V_2$ with $V_1,V_2\neq \emptyset$ and $\bigcup_{i=0}^{2}V_i=T$, such that the following conditions hold:
\begin{enumerate}
\item[{(1)}] $N_G(u)=V_0\cup V_1$ and $N_G(w)=V_0\cup V_2$.
\item[{(2)}] $\{v_1,v_2\}\in E(G)$, for every $v_1\in V_1$ and every $v_2\in V_2$.
\end{enumerate}
}  
\end{Definition}
\begin{Remark}
\em{Given three arbitrary graphs $G_0$, $G_1$ and $G_2$ on disjoint sets  of vertices $V_0$, $V_1$ and $V_2$, respectively, where $V_1,V_2\neq \emptyset$, we can construct a graph in the family $\mathcal{G}_T$ with  $T=\bigcup_{i=0}^{2}V_i$. Note that the vertices in $V_0$ can be adjacent to some vertices in $V_1$ and $V_2$.}
\end{Remark}
\par An explicit example of a graph $G$ for which $G\in \mathcal{G}_T$ for some $T\subseteq V(G)$ with $|T|=|V(G)|-2$ is depicted in Figure \ref{nice}.
\par Before stating the main theorem of this section we need to  introduce a family of graphs that is essential in our characterization. In the following, $3K_1$ denotes the graph consisting of three isolated vertices.

\begin{Definition}\label{D5}
\em{Let $G$ be a graph on $[n]$ with $G\neq G'\ast 2K_1$ for any graph $G'$. We say that $G$ is a $D_5$-type graph, if one of the following conditions holds:
\begin{enumerate}
\item[{(1)}] $G\in \mathcal{G}_T$ for some $T\subseteq [n]$;
\item[{(2)}] $G=H\ast 3K_1$, for some graph $H$;
\item[{(3)}] $G=H\ast(K_1\dot{\cup}K_2)$, for some graph $H$.
\end{enumerate}
}  
\end{Definition}

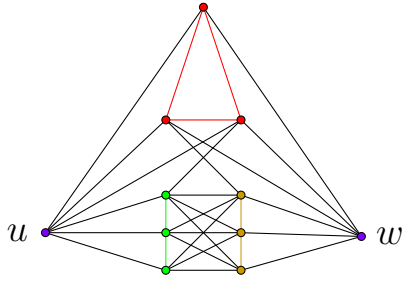
\begin{figure}[H]
\definecolor{qqffqq}{rgb}{0.,1.,0.}
\definecolor{cczzqq}{rgb}{0.8,0.6,0.}
\definecolor{ffqqqq}{rgb}{1.,0.,0.}
\definecolor{xfqqff}{rgb}{0.4980392156862745,0.,1.}
\centering
\begin{tikzpicture}[scale=1,line cap=round,line join=round,>=triangle 45,x=1.0cm,y=1.0cm]
\draw (-3.6,0.5)-- (-2.,2.);
\draw (-3.6,0.5)-- (-1.,2.);
\draw (-1.,1.)-- (-2.,1.);
\draw (-1.,0.)-- (-2.,0.);
\draw (-1.,1.)-- (-2.,0.);
\draw (-2.,1.)-- (-1.,0.);
\draw (-3.6,0.5)-- (-2.,0.);
\draw (0.6,0.45)-- (-1.,2.);
\draw (0.6,0.45)-- (-2.,2.);
\draw (0.6,0.45)-- (-1.,0.);
\draw (-2.,2.)-- (-1.,1.);
\draw (-1.,2.)-- (-2.,1.);
\draw [color=ffqqqq] (-1.5,3.5)-- (-1.,2.);
\draw [color=ffqqqq] (-1.5,3.5)-- (-2.,2.);
\draw (-1.5,3.5)-- (-3.6,0.5);
\draw (-1.5,3.5)-- (0.6,0.45);
\draw [color=ffqqqq] (-2.,2.)-- (-1.,2.);
\draw (-3.6,0.5)-- (-2.,1.);
\draw (0.6,0.45)-- (-1.,1.);
\draw (-2.,0.5)-- (-3.6,0.5);
\draw [color=qqffqq] (-2.,0.5)-- (-2.,1.);
\draw [color=qqffqq] (-2.,0.5)-- (-2.,0.);
\draw [color=cczzqq] (-1.,0.5)-- (-1.,0.);
\draw [color=cczzqq] (-1.,0.5)-- (-1.,1.);
\draw (0.6,0.45)-- (-1.,0.5);
\draw (-2.,1.)-- (-1.,0.5);
\draw (-1.,0.5)-- (-2.,0.);
\draw (-2.,0.5)-- (-1.,1.);
\draw (-4.26,0.75) node[anchor=north west] {\large{$u$}};
\draw (0.65,0.71) node[anchor=north west] {\large{$w$}};
\draw (-2.,0.5)-- (-1.,0.5);
\draw (-2.,0.5)-- (-1.,0.);
\begin{scriptsize}
\draw [fill=xfqqff] (-3.6,0.5) circle (1.5pt);
\draw [fill=ffqqqq] (-2.,2.) circle (1.5pt);
\draw [fill=ffqqqq] (-1.,2.) circle (1.5pt);
\draw [fill=cczzqq] (-1.,1.) circle (1.5pt);
\draw [fill=qqffqq] (-2.,1.) circle (1.5pt);
\draw [fill=cczzqq] (-1.,0.) circle (1.5pt);
\draw [fill=qqffqq] (-2.,0.) circle (1.5pt);
\draw [fill=xfqqff] (0.6,0.45) circle (1.5pt);
\draw [fill=ffqqqq] (-1.5,3.5) circle (1.5pt);
\draw [fill=qqffqq] (-2.,0.5) circle (1.5pt);
\draw [fill=cczzqq] (-1.,0.5) circle (1.5pt);
\end{scriptsize}
\end{tikzpicture}
\vspace{1mm}
\caption{A $D_5$-type graph $G$ with $G\in \mathcal{G}_T$, where  $T=V(G)\setminus\{u,w\}$.}
\label{nice}
\end{figure}

Now, we are ready to state the main result of this section which is an explicit characterization of graphs $G$ with $\depth S/J_G=5$.

\begin{Theorem}\label{main}
Let $G$ be a graph on $[n]$ with $n\geq 5$. Then the following statements are equivalent:
\begin{enumerate}
\item[{(a)}] $\depth S/J_G=5$.
\item[{(b)}] $G$ is a $D_5$-type graph.
\end{enumerate}  
\end{Theorem}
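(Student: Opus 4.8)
The plan is to prove the two implications separately, throughout using the Hochster type formula of Theorem~\ref{Hochster}: since $\depth S/J_G=\min\{i: H^i_{\mathfrak m}(S/J_G)\neq 0\}$ and each summand $H^{d_q}_{\mathfrak m}(S/q)$ is nonzero, we have $\depth S/J_G\le i$ exactly when $M_{i,q}=\dim_{\KK}\widetilde H^{i-d_q-1}((q,1_{\mathcal M_G});\KK)\neq 0$ for some $q\in\mathcal M_G$. By Corollary~\ref{combin} we already know $\depth S/J_G\ge 5$ once $G\neq G'\ast 2K_1$, so the real content is deciding when equality holds, i.e.\ when some interval $(q,1_{\mathcal M_G})$ has nonvanishing reduced cohomology in the appropriate degree. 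Note $\dim S/P_T(H)=n-c_H(T)+\text{(correction)}$, so the degree $i-d_q-1$ in which we need nonvanishing is controlled by $n-|T|$ and the number of components; one checks that $\depth=5$ forces attention on elements $q=P_T(H)$ with $|T|=n-2$ (two ``free'' coordinates left), which is exactly the regime in which Definition~\ref{gpich} lives.

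\textbf{(b)$\Rightarrow$(a).} Here I would treat the three cases of Definition~\ref{D5} essentially by direct computation of the relevant local cohomology. For $G=H\ast 3K_1$ and $G=H\ast(K_1\dot\cup K_2)$, join products with small graphs are exactly the situation handled by the formulas of \cite{KumarS,SZ} referenced in the introduction, so in those two cases one can simply quote/derive $\depth S/J_G=5$ from the behavior of depth under join product; alternatively one identifies the poset element $q$ (corresponding to $T=V(H)$ or a slight variant) whose interval $(q,1_{\mathcal M_G})$ is disconnected or has $\widetilde H^0\neq0$, giving a nonzero $M_{5,q}$. For $G\in\mathcal G_T$ with $|T|=n-2$, the key is to locate the poset element $q$ attached to this $T$ (with the two non-adjacent vertices $u,w$ kept free) and show $\widetilde H^{0}((q,1_{\mathcal M_G});\KK)\neq 0$, i.e.\ that this interval is disconnected; the splitting into the pieces coming from $V_1$ and $V_2$ (which are forced to be nonempty and completely joined to each other) is what produces the disconnection. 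Finally one must check the \emph{upper} bound $\depth S/J_G\le 5$ is not violated by some other $q$ forcing depth $4$ — but that is ruled out by the standing hypothesis $G\neq G'\ast 2K_1$ together with \cite[Theorem~5.3]{RSK2}.

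\textbf{(a)$\Rightarrow$(b).} This is the harder direction and I expect it to be the main obstacle. Assume $\depth S/J_G=5$; then $G\neq G'\ast 2K_1$ (else depth would be $4$), so we must land in one of the three families. By Theorem~\ref{Hochster} there is $q=P_T(H)\in\mathcal M_G$ with $\widetilde H^{i-d_q-1}((q,1_{\mathcal M_G});\KK)\neq 0$ for $i=5$; a dimension count on $d_q=\dim S/q$ pins down $|T|$ and the component structure of $H-T$, and forces the interval $(q,1_{\mathcal M_G})$ to be \emph{disconnected} (nonvanishing $\widetilde H^0$) rather than merely non-contractible in higher degree — this is where the topological results announced around Theorem~\ref{corona} (``meet-contractible''/corona arguments, vanishing of $\widetilde H^0$ and $\widetilde H^1$ of the relevant subposets) do the work: they show that for most $G$ the intervals are connected and even contractible in low degrees, so nonvanishing $\widetilde H^0$ is a severe structural restriction. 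Translating that restriction back through Lemma~\ref{crucial} (every poset element is some $P_{T'}(H')$) one reconstructs the decomposition of $V(G)$ into the sets $V_0,V_1,V_2$ (or recognizes the join structure $H\ast 3K_1$, $H\ast(K_1\dot\cup K_2)$) exactly as in Definitions~\ref{gpich} and \ref{D5}. The delicate point — and the crux of the whole argument — is the careful case analysis showing that disconnection of $(q,1_{\mathcal M_G})$ \emph{only} arises from these configurations; handling the non-minimal primary components $q_j+P_\emptyset(G)$ built into $\mathcal M_G$ (the reason $\mathcal M_G$ was defined instead of reusing $\mathcal A_G$ or $\mathcal Q_G$) is precisely what makes those cohomology computations tractable, and is the technical heart I would develop via Theorem~\ref{corona} and Mayer–Vietoris reductions on the order complexes.
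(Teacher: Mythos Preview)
Your overall architecture matches the paper's: Corollary~\ref{combin} for the lower bound, the Hochster formula of Theorem~\ref{Hochster} to detect $\depth\le 5$, Theorem~\ref{corona} for the crucial interval computations, and the join--product depth formulas of \cite{KumarS} for the two join families in $(b)\Rightarrow(a)$. That direction is essentially as in the paper.

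The gap is in $(a)\Rightarrow(b)$, and it is concrete. You write that ``a dimension count on $d_q$ pins down $|T|$'' to $n-2$ and ``forces the interval to be disconnected (nonvanishing $\widetilde H^0$)''. Neither claim survives. Since $d_q=n+c_H(T)-|T|$, a nonzero $M_{5,q}=\dim_\KK\widetilde H^{\,4-d_q}$ can occur for each $d_q\in\{0,2,3,4,5\}$, and for fixed $d_q$ there are several pairs $(|T|,c_H(T))$. The paper (working contrapositively: assume $G$ is not $D_5$-type and show every $M_{5,q}=0$) must handle all of them:
\begin{itemize}
\item $d_q=3$ ($|T|=n-2$, $c_H(T)=1$): here one needs $\widetilde H^1=0$, not $\widetilde H^0$, and this is exactly Theorem~\ref{corona}(b).
\item $d_q=4$: besides the case $|T|=n-2$, $c_H(T)=2$ covered by Theorem~\ref{corona}(a), there is the case $|T|=n-3$, $c_H(T)=1$, which requires a separate, explicit connectedness argument for $(q,1_{\mathcal M_G})$ and invokes Lemma~\ref{nice suggestion}.
\item $d_q=5$: one needs $\widetilde H^{-1}=0$, i.e.\ the interval is nonempty; when $|T|=n-3$, $c_H(T)=2$ and $q$ happens to be a minimal prime, Lemma~\ref{nice suggestion} is again what forces $G$ to be $D_5$-type.
\item $d_q\in\{0,2\}$: contractibility of $(\mathfrak m,1_{\mathcal M_G})$ and $(\widehat{\mathfrak m},1_{\mathcal M_G})$ (Remark~\ref{remark}).
\end{itemize}
The missing ingredient in your outline is precisely Lemma~\ref{nice suggestion}: it is the bridge that, in the $|T|=n-3$ cases, converts the poset obstruction into membership in one of the join families $H\ast 3K_1$ or $H\ast(K_1\dot\cup K_2)$ (or back into $\mathcal G_T$). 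Without it your sketch has no mechanism to produce those two families from the poset data; your proposal only ever extracts the $\mathcal G_T$ structure from the $|T|=n-2$ analysis. So the ``delicate case analysis'' you anticipate is real, but it is not confined to $|T|=n-2$ and $\widetilde H^0$, and Lemma~\ref{nice suggestion} is an essential component you should add.
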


To prove the above theorem, we need to prepare several auxiliary ingredients. First, we state the following lemma that follows with the same argument as in the proof of \cite[Lemma~4.1]{RSK2}.

\begin{Lemma}\label{vital}
Let $G$ be a graph on $[n]$. Then $q+P_\emptyset(G)\in \mathcal{M}_G$, for every $q\in \mathcal{M}_G$.
\end{Lemma}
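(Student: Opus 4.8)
\textbf{Proof proposal for Lemma \ref{vital}.}

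The plan is to argue by induction on the number of steps in the construction of the poset $\mathcal{M}_G$, i.e. on the ``level'' at which an element $q$ first appears, following the inductive scheme of the proof of \cite[Lemma~4.1]{RSK2}. The base case concerns the elements coming from the very first layer, namely the prime ideals in the poset $\mathcal{R}_G$ built from the decomposition $J_G=\bigcap_{T\subseteq[n]}P_T(G)$; these are exactly the sums $\sum_{T\in\mathcal{A}}P_T(G)$ over subsets $\mathcal{A}$ of $2^{[n]}$ that happen to be prime. For such a $q$, observe that $P_\emptyset(G)=P_\emptyset(G)$ is itself one of the ideals $P_T(G)$ in the decomposition (take $T=\emptyset$), so $q+P_\emptyset(G)=\sum_{T\in\mathcal{A}\cup\{\emptyset\}}P_T(G)$ is again a sum of ideals from the same decomposition, hence lies in $\mathcal{R}_G$; and it is prime, being a sum $q+P_\emptyset(G)$ where one can check (using Lemma \ref{crucial}, writing $q=P_T(H)$) that it is of the form $P_{T'}(H')$ and in particular prime. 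So $q+P_\emptyset(G)\in\mathcal{M}_G$.

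For the inductive step, suppose $q$ appears at some stage of the construction as a prime ideal in a poset $\mathcal{R}_I$, where $I$ is a non-prime ideal encountered at an earlier stage, and $\mathcal{R}_I$ is formed from the decomposition
\[
I=q_1\cap\cdots\cap q_t\cap(q_1+P_\emptyset(G))\cap\cdots\cap(q_t+P_\emptyset(G))
\]
with $q_1,\dots,q_t$ the minimal primes of $I$. Then $q=\sum_{j\in A}q_j+\sum_{j\in B}(q_j+P_\emptyset(G))$ for some index sets $A,B$. If $B\neq\emptyset$, then $q$ already contains $P_\emptyset(G)$, so $q+P_\emptyset(G)=q\in\mathcal{M}_G$ and we are done. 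If $B=\emptyset$, then $q=\sum_{j\in A}q_j$, and $q+P_\emptyset(G)=\sum_{j\in A}q_j+P_\emptyset(G)=\sum_{j\in A}(q_j+P_\emptyset(G))$, which is precisely one of the sums defining an element of $\mathcal{R}_I$; it is prime by the same reasoning as above (invoke Lemma \ref{crucial} to write it as $P_{T'}(H')$), hence it lies in $\mathcal{M}_G$. In either case the conclusion holds, completing the induction.

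The main point to be careful about — and the step I expect to require the most attention — is verifying that $q+P_\emptyset(G)$ is genuinely \emph{prime} (so that it is an admissible element of $\mathcal{M}_G$, which consists only of prime ideals), and that adding $P_\emptyset(G)$ does not push us outside the class of ideals of the form $P_T(H)$. This is where Lemma \ref{crucial} and the structural description of the $P_T(H)$'s (their primality being equivalent to a cut-point condition, together with the behavior of these ideals under sums) must be used; one has to check that if $q=P_T(H)$ then $q+P_\emptyset(G)$ is again of this shape, with the relevant subset having the cut point property in the appropriate graph. Once this structural fact is in hand, the bookkeeping over the layers of the construction is routine, exactly parallel to \cite[Lemma~4.1]{RSK2}.
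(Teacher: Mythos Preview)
Your proposal is correct and matches the paper's approach exactly: the paper gives no independent proof of this lemma, stating only that it ``follows with the same argument as in the proof of \cite[Lemma~4.1]{RSK2},'' which is precisely the inductive scheme over the layers of the construction that you outline. Your identification of the primality of $q+P_\emptyset(G)$ as the one substantive point (handled via the structural description underlying Lemma~\ref{crucial}, i.e.\ \cite[Proposition~3.4]{RSK2}) is accurate and is the content of the cited argument.
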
	

We also need to recall a concept from the literature of topology of 
posets.
\begin{Definition}
\em{A poset $\mathcal{P}$ is said to be meet-contractible if there exists an element $\alpha\in \mathcal{P}$ such that $\alpha$ has a meet with every element $\beta\in \mathcal{P}$.}
\end{Definition}
The following lemma clarifies the importance of the notion of meet-contractible posets.
\begin{Lemma}\em{(\cite[Theorem~3.2]{BW}, see also \cite[Proposition~2.4]{W})}\label{meet}
\em{Every meet-contractible poset is contractible.}
\end{Lemma}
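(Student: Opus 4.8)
The plan is to run the standard conical-contraction argument, converting the meet with the distinguished element into an explicit homotopy of the order complex $\Delta(\mathcal{P})$ down to a single vertex. Fix the element $\alpha\in\mathcal{P}$ provided by meet-contractibility, so that the meet $\alpha\wedge\beta$ exists for every $\beta\in\mathcal{P}$, and define
\[
f\colon \mathcal{P}\longrightarrow \mathcal{P},\qquad f(\beta)=\alpha\wedge\beta .
\]
The hypothesis is exactly what makes $f$ well-defined on all of $\mathcal{P}$. The first thing I would verify is that $f$ is order-preserving: if $\beta_1\preccurlyeq\beta_2$, then $\alpha\wedge\beta_1$ is a common lower bound of $\alpha$ and $\beta_2$, since it lies below $\alpha$ and below $\beta_1\preccurlyeq\beta_2$; by the defining property of the meet $\alpha\wedge\beta_2$ as the greatest such lower bound, we get $\alpha\wedge\beta_1\preccurlyeq\alpha\wedge\beta_2$, that is, $f(\beta_1)\preccurlyeq f(\beta_2)$.

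The key observation is that $f$ is comparable, pointwise, to two other order-preserving self-maps of $\mathcal{P}$. On one hand $f(\beta)=\alpha\wedge\beta\preccurlyeq\beta$, so $f\preccurlyeq\id_{\mathcal{P}}$ pointwise; on the other hand $f(\beta)=\alpha\wedge\beta\preccurlyeq\alpha$, so $f\preccurlyeq c_\alpha$ pointwise, where $c_\alpha$ is the (trivially order-preserving) constant map with value $\alpha$. I would then invoke the standard order homotopy lemma from \cite{BW, W}: whenever two order-preserving maps $g,h\colon\mathcal{P}\to\mathcal{P}$ satisfy $g(\beta)\preccurlyeq h(\beta)$ for all $\beta$, the induced simplicial maps on the order complexes are homotopic. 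Applying this twice yields the chain
\[
\id_{\Delta(\mathcal{P})}\ \simeq\ \Delta(f)\ \simeq\ \Delta(c_\alpha).
\]
Since $c_\alpha$ has image the single vertex $\alpha$, the induced map $\Delta(c_\alpha)$ is constant at $\alpha$; hence $\id_{\Delta(\mathcal{P})}$ is null-homotopic and $\Delta(\mathcal{P})$ is contractible, which is precisely the claim. As an alternative to the constant map, one can note that $f$, satisfying $f\preccurlyeq\id$, is an order-preserving retraction onto its image $f(\mathcal{P})=\{\alpha\wedge\beta:\beta\in\mathcal{P}\}$, so $\Delta(\mathcal{P})$ collapses onto $\Delta(f(\mathcal{P}))$; because $\alpha=f(\alpha)$ is the maximum of $f(\mathcal{P})$, that order complex is a cone with apex $\alpha$ and is therefore contractible.

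The only genuinely nontrivial input is the order homotopy lemma itself, which I would quote rather than reprove; its usual justification passes through the cylinder poset $\mathcal{P}\times\{0\prec 1\}$ and the map $(\beta,0)\mapsto f(\beta)$, $(\beta,1)\mapsto h(\beta)$, whose induced map on order complexes furnishes the homotopy. Beyond that, the proof is entirely formal, so the main point requiring care is simply the monotonicity check for $f$ and making sure the two comparability relations $f\preccurlyeq\id$ and $f\preccurlyeq c_\alpha$ are applied to legitimately order-preserving maps before quoting the lemma. I expect no serious obstacle: meet-contractibility is designed exactly so that $\beta\mapsto\alpha\wedge\beta$ is a globally defined, monotone contraction toward $\alpha$.
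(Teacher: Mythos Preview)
Your argument is correct and is precisely the classical proof of this fact: the map $\beta\mapsto\alpha\wedge\beta$ is monotone, lies below both $\id_{\mathcal P}$ and the constant map $c_\alpha$, and the order homotopy lemma then gives $\id_{\Delta(\mathcal P)}\simeq\Delta(c_\alpha)$. There is nothing to criticize in the mathematics.

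As for comparison with the paper: the paper does not actually prove Lemma~\ref{meet}. It is stated purely as a citation of \cite[Theorem~3.2]{BW} and \cite[Proposition~2.4]{W}, with no argument supplied. What you have written is, in effect, a faithful reconstruction of the proof found in those references, so there is no divergence of approach to discuss---you have simply filled in what the authors chose to quote.
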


In the following theorem that is crucial in the proof of Theorem~\ref{main}, we discuss the vanishing of the zeroth and the first reduced cohomology groups of the subposets associated with the elements of $\mathcal{M}_G$, which are of the form $P_T(H)$ for some graph $H$ on $[n]$ and some $T\subseteq [n]$ with $|T|=n-2$.

\begin{Theorem}\label{corona}
Let $G$ be a graph on $[n]$ and $q\in \mathcal{M}_G$, where $q=P_{T}(H)$ for some graph $H$ on $[n]$ and some $T\subseteq [n]$ with $|T|=n-2$.
\begin{enumerate}
\item[{(a)}] If $c_H(T)=2$, then $(q,1_{\mathcal{M}_G})$ is connected if and only if $G\notin \mathcal{G}_T$.
\item[{(b)}] If $c_H(T)=1$, then $\widetilde{H}^{1}((q,1_{{\mathcal{M}_G}});\KK)=0$ if and only if $G\notin \mathcal{G}_{T}$.
\end{enumerate}  
\end{Theorem}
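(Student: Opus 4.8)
\textbf{Proof proposal for Theorem \ref{corona}.}

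The plan is to analyze the subposet $(q, 1_{\mathcal{M}_G})$ explicitly, using that $q = P_T(H)$ with $|T| = n-2$ is close to maximal in $\mathcal{M}_G$, so the interval above it is small and combinatorially tractable. Write $[n] \setminus T = \{u, w\}$. First I would describe exactly which elements of $\mathcal{M}_G$ lie strictly above $q$: by Lemma \ref{crucial} every such element has the form $P_{T'}(H')$, and since the ordering is by reverse inclusion and $\dim S/q$ is small, the candidates are highly constrained — essentially those obtained by adjoining $(x_u, y_u)$, or $(x_w, y_w)$, or $P_\emptyset$-type contributions, together with the primes produced by the decomposition procedure in Definition \ref{typo} applied to $q + P_\emptyset(G)$ (which lies in $\mathcal{M}_G$ by Lemma \ref{vital}). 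In the case $c_H(T) = 2$ (part (a)), $q$ is already prime (it is $P_T$ for a $T$ with the cut point property in $H$), so the interval is governed by how $u$ and $w$ interact with the three "blocks" of $T$; in the case $c_H(T) = 1$ (part (b)), $q$ may be non-prime, which is precisely why the extra primes from the $P_\emptyset(G)$-augmented decomposition in Definition \ref{typo} enter, and this is where $\mathcal{M}_G$ differs from the older posets.

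For the direction $G \notin \mathcal{G}_T \implies$ connectivity (resp. vanishing of $\widetilde{H}^1$), I would exhibit a meet-contractibility witness: identify a distinguished element $\alpha$ of $(q, 1_{\mathcal{M}_G})$ — a natural candidate is the prime $q + (x_u, y_u)$ or $q + P_\emptyset(G)$, whichever lies in the interval — and check that $\alpha \wedge \beta$ exists in $(q, 1_{\mathcal{M}_G})$ for every $\beta$. When $G \notin \mathcal{G}_T$, the obstruction to forming such meets (which would be the existence of the subsets $V_0, V_1, V_2$ with the adjacency pattern of Definition \ref{gpich}) is absent, so the meet always lands back inside the interval, and then Lemma \ref{meet} gives contractibility, hence connectivity and $\widetilde{H}^i = 0$ for all $i$. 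For part (b) I expect one may need instead a Mayer–Vietoris argument on $\Delta((q,1_{\mathcal{M}_G}))$, splitting along a vertex $v$ of the order complex whose link, star, and deletion are each shown contractible (or with controlled $\widetilde{H}^0$), forcing $\widetilde{H}^1 = 0$; the choice of $v$ should again be the augmented prime $q + P_\emptyset(G)$.

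For the converse, $G \in \mathcal{G}_T \implies$ disconnectedness (resp. $\widetilde{H}^1 \neq 0$), I would use the sets $V_0, V_1, V_2$ furnished by membership in $\mathcal{G}_T$ to write down at least two elements of $(q, 1_{\mathcal{M}_G})$ that lie in distinct connected components — concretely, the primes coming from the decompositions associated to the two connected components of the relevant graph minus $T$, which the adjacency condition (2) of Definition \ref{gpich} keeps from being joined by any chain through the interval. For part (b), where $c_H(T) = 1$, the interval should turn out to have the homotopy type of a circle (or a wedge involving one), and I would pin this down by the same explicit listing of elements: the $\mathcal{G}_T$-structure produces a "square" of primes $q \precneqq q_1, q_2 \precneqq q_{12}$ whose order complex is an annulus. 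The main obstacle I anticipate is the bookkeeping in part (b): because $q = P_T(H)$ with $c_H(T) = 1$ need not be prime, one must carefully run the Definition \ref{typo} procedure and track every prime it spawns, and the equivalence hinges on showing that exactly the $\mathcal{G}_T$-pattern in $G$ (not $H$) controls whether the resulting subposet is meet-contractible — disentangling the roles of $G$ and $H$ here, and verifying that no stray prime from the augmented decomposition accidentally reconnects a disconnected interval or kills the $1$-cycle, is the delicate part.
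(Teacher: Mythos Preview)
Your proposal has the right overall architecture (explicit analysis of the interval, meet-contractibility where possible, Mayer--Vietoris for part (b)), but there are two genuine gaps that would block the argument.

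\textbf{First, the candidate elements are on the wrong side of $q$.} The poset is ordered by reverse inclusion, so $(q,1_{\mathcal{M}_G})$ consists of ideals \emph{strictly contained in} $q$. Your proposed witnesses $q+(x_u,y_u)$ and $q+P_\emptyset(G)$ contain $q$, so they sit below $q$ (or coincide with $q$) and never lie in the interval. The correct candidate for meet-contractibility is $P_\emptyset(G)$ itself, and the paper uses exactly this --- but only in part~(b), Case~1 (when the prime $q'=(x_i,y_i:i\in T)$ is \emph{not} in the interval). In part~(a) one has $c_H(T)=2$, so $f_{u,w}\notin q$; if $u,w$ lie in the same component of $G$ then $f_{u,w}\in P_\emptyset(G)$ and $P_\emptyset(G)\not\subseteq q$, so $P_\emptyset(G)$ is not available as a meet witness. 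Meet-contractibility does not go through for~(a): the paper instead builds explicit paths in the $1$-skeleton via the intermediate primes $P_{T\setminus\{\alpha\}}(G)$, partitioning $T$ into sets $L_0,L_1,L_2,L_3$ according to adjacency with $u$ and $w$, and the hypothesis $G\notin\mathcal{G}_T$ is invoked precisely to connect the $L_1$-side to the $L_2$-side (Step~4).

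\textbf{Second, you miss the reduction of (b) to (a).} The decisive choice of Mayer--Vietoris vertex is not $q+P_\emptyset(G)$ but the prime $q'=(x_i,y_i:i\in T)$ obtained by dropping $f_{u,w}$ from $q$. When $q'\in(q,1_{\mathcal{M}_G})$ it is a \emph{minimal} element of that interval, so $\mathrm{link}_\Delta(q')=\Delta\big((q',1_{\mathcal{M}_G})\big)$, which is exactly the interval treated in part~(a) (with $c_{H'}(T)=2$). Thus $\widetilde{H}_0(\mathrm{link}_\Delta(q');\KK)$ is governed by part~(a), and Mayer--Vietoris (with $\mathrm{star}$ a cone and $\mathrm{del}$ meet-contractible via $P_\emptyset(G)$) gives $\widetilde{H}_1(\Delta;\KK)\cong \widetilde{H}_0(\mathrm{link}_\Delta(q');\KK)$. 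This single isomorphism handles both directions of~(b) at once: $G\notin\mathcal{G}_T$ gives vanishing by part~(a), and $G\in\mathcal{G}_T$ gives nonvanishing by part~(a). Your ``square of primes'' picture for the converse is not needed, and trying to enumerate the interval directly would force you to re-prove part~(a) inside part~(b) without realizing it.
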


\begin{proof} 
Without loss of generality we assume that $T=\{1,\ldots,n-2\}$. \par $(a)$ We have $q=(x_1,\ldots,x_{n-2},y_1,\ldots,y_{n-2})$, since $c_H(T)=2$. Also, $\{n-1,n\}\notin E(G)$. Indeed, assume on  contrary that $\{n-1,n\}\in E(G)$. Since $q\in \mathcal{M}_G$, there exists $U\in \mathcal{C}(G)$ such that $P_U(G)\subseteq q$. It follows that $U\subseteq T$. On the other hand, $n-1$ and $n$ are two adjacent vertices of $G-U$. So, we get $f_{n-1,n}\in P_U(G)\subseteq q$, a contradiction.
\par Now, assume that $G\notin \mathcal{G}_T$. We show that $(q,1_{\mathcal{M}_G})$ is a connected poset. We proceed in the following steps:
\par Let $L_0=N_G(n-1)\cap N_G(n)$, $L_1=N_G(n)\backslash N_G(n-1)$, and $L_2=N_G(n-1)\backslash N_G(n)$. We also let $L_3=\{i\in T: i\notin N_G(n-1)\cup N_G(n)\}$. Set $X=\{P_{T\setminus \{\alpha\}}(G): \alpha \in L_1 \cup L_2 \cup L_3\}$. One has $X\subseteq(q,1_{\mathcal{M}_G})$, since $\{n-1,n\}\notin E(G)$ and $\alpha\notin L_0$ for every $\alpha\in L_1\cup L_2\cup L_3$. 
\par \textbf{Step 1:} Let $q'\in (q,1_{\mathcal{M}_G})$. We claim that there exists $P_{T\setminus\{\alpha\}}(G)\in X$ such that there is a path between $q'$ and $P_{T\setminus\{\alpha\}}(G)$ in the 1-skeleton graph of $(q,1_{\mathcal{M}_G})$. 
\par By Lemma \ref{crucial} we have that $q'=P_{T'}(H')$, for some graph $H'$ on $[n]$ and some $T'\subseteq [n]$. Now, there exists $U\in \mathcal{C}(G)$ such that $P_U(G)\subseteq q'$. It follows that $U\subseteq T'\subsetneq T$, since $q'\in (q,1_{\mathcal{M}_G})$. Now we consider the following cases:
\par First assume that $T\setminus U\subseteq L_3$. Therefore, the vertices $n-1$ and $n$ are isolated in $G-U$. This implies that  $P_U(G)\subseteq P_{T\setminus\{\alpha\}}(G)$, for every $\alpha\in T\setminus U$.
\par Next assume that $T\setminus U\nsubseteq L_3$. Let $\alpha\in (T\setminus U)\setminus L_3$. Clearly, $\alpha\notin L_0$. Indeed, otherwise we get $f_{n-1,n}\in P_U(G)$, a contradiction. So, without loss of generality we assume that $\alpha\in L_1$. It follows that there is no path between vertices $\alpha$ and $n-1$ in $G-U$. Therefore, we have that   $P_U(G)\subseteq P_{T\setminus\{\alpha\}}(G)$.
\par Thus, the claim follows from both above cases.
\par \textbf{Step 2:} Assume that $L_3\neq \emptyset$. Let $\alpha\in L_3$ and $q'\in (q,1_{\mathcal{M}_G})$. We show that there exists a path between $q'$ and $P_{T\setminus\{\alpha\}}(G)$ in the 1-skeleton graph of $(q,1_{\mathcal{M}_G})$.
\par By Step 1, there exists $P_{T\setminus\{\beta\}}(G)\in X$ such that there is a path between $q'$ and $P_{T\setminus\{\beta\}}(G)$ in the 1-skeleton graph of $(q,1_{\mathcal{M}_G})$. Moreover, since $\alpha\in L_3$, it is not difficult to see that $P_{T\setminus\{\alpha\}}(G)+P_{T\setminus\{\alpha,\beta\}}(G)\in (q,1_{\mathcal{M}_G})$ and $P_{T\setminus\{\beta\}}(G)\supseteq P_{T\setminus\{\alpha,\beta\}}(G)$. Therefore, we have the path
\[P_{T\setminus\{\alpha\}}(G),P_{T\setminus\{\alpha\}}(G)+P_{T\setminus\{\alpha,\beta\}}(G),P_{T\setminus\{\alpha,\beta\}}(G),P_{T\setminus\{\beta\}}(G),\]
in the 1-skeleton graph of $(q,1_{\mathcal{M}_G})$. This implies that 
$(q,1_{\mathcal{M}_G})$ is connected. So, for the rest of the proof we may assume that $L_3=\emptyset$.
\par \textbf{Step 3:} Now assume that $\alpha,\beta\in L_1$, and $\alpha<\beta$. We claim that there exists a path in the 1-skeleton graph of $(q,1_{\mathcal{M}_G})$ between $P_{T\setminus\{\alpha\}}(G)$ and $P_{T\setminus\{\beta\}}(G)$, (the situation in the case $\alpha,\beta\in L_2$ is similar).
\par We have that
\[P_{T\setminus\{\alpha\}}(G)=(x_i,y_i: i\in T\setminus\{\alpha\})+(f_{\alpha,n}),\]
and
\[P_{T\setminus\{\beta\}}(G)=(x_i,y_i: i\in T\setminus\{\beta\})+(f_{\beta,n}).\]
So, we get
\[P_{T\setminus\{\alpha,\beta\}}(G)=(x_i,y_i: i\in T\setminus\{\alpha,\beta\})+(f_{\alpha,n},f_{\beta,n},f_{\alpha,\beta}).\]
Therefore, we get the path
\[P_{T\setminus\{\alpha\}}(G),P_{T\setminus\{\alpha,\beta\}}(G),P_{T\setminus\{\beta\}}(G)\]
in the 1-skeleton graph of $(q,1_{\mathcal{M}_G})$, as desired.
\par \textbf{Step 4:} Let $\alpha \in L_1$ and $\beta \in L_2$. We show that there exists a path in the 1-skeleton graph of $(q,1_{\mathcal{M}_G})$ between the vertices $P_{T\setminus\{\alpha\}}(G)$ and $P_{T\setminus\{\beta\}}(G)$.
\par First assume that $\{\alpha,\beta\}\notin E(G)$. It follows that 
\[P_{T\setminus\{\alpha,\beta\}}(G)=(x_i,y_i: i\in T\setminus\{\alpha,\beta\})+(f_{\beta,n-1},f_{\alpha,n}).\]
Therefore, we get the path 
\[P_{T\setminus\{\alpha\}}(G),P_{T\setminus\{\alpha,\beta\}}(G),P_{T\setminus\{\beta\}}(G),\]
as desired.
\par Next assume that $\{\alpha,\beta\}\in E(G)$. Now, there exist vertices  $t_1\in L_1$ and $t_2\in L_2$, such that $\{t_1,t_2\}\notin E(G)$. Indeed, if no such vertices exist, then by putting $u=n$ and $w=n-1$ and also $V_0=L_0,V_1=L_1$ and $V_2=L_2$ in Definition \ref{gpich}, we get $G\in \mathcal{G}_T$, since $\{n-1,n\}\notin E(G)$ and $L_3=\emptyset$. Therefore, we get a contradiction. Now, by Step 3, there is a path between $P_{T\setminus\{\alpha\}}(G)$ and $P_{T\setminus\{t_1\}}(G)$, and also a path between $P_{T\setminus\{\beta\}}(G)$ and $P_{T\setminus\{t_2\}}(G)$, in the underlying graph of $(q,1_{\mathcal{M}_G})$. On the other hand,  since $\{t_1,t_2\}\notin E(G)$, the argument that we used in the first part of this step yields a path between $P_{T\setminus\{t_1\}}(G)$ and $P_{T\setminus\{t_2\}}(G)$. Therefore, we get a path between $P_{T\setminus\{\alpha\}}(G)$ and $P_{T\setminus\{\beta\}}(G)$ in the 1-skeleton graph of $(q,1_{\mathcal{M}_G})$,  as desired.
\par \textbf{Step 5:} Now suppose that $q_{1}, q_{2}\in (q,1_{\mathcal{M}_G})$ and $q_1\neq q_2$. Then, by Step 1, there exist $P_{T\setminus\{\alpha\}}(G),P_{T\setminus\{\beta\}}(G)\in X$ such that there is a path between $q_1$ and $P_{T\setminus\{\alpha\}}(G)$, and a path between $q_2$ and $P_{T\setminus\{\beta\}}(G)$ in the 1-skeleton graph of $(q,1_{\mathcal{M}_G})$. Moreover, we have a path between $P_{T\setminus\{\alpha\}}(G)$ and $P_{T\setminus\{\beta\}}(G)$, by Steps 3 and 4. Therefore, we get a path between $q_1$ and $q_2$ in the 1-skeleton graph of $(q,1_{\mathcal{M}_G})$. Thus, $(q,1_{\mathcal{M}_G})$ is connected.

\par \vspace{.5cm}

For the converse, assume that $G\in \mathcal{G}_T$. Therefore, there exist three disjoint subsets $V_0$, $V_1$ and $V_2$ of $T$, such that the conditions of Definition \ref{gpich} hold. Now, let $q_1=P_{V_0\cup V_1}(G)$, and $q_2=P_{V_0\cup V_2}(G)$. We have that $q_1,q_2\in (q,1_{\mathcal{M}_G})$. Now, we claim that there is no path between $q_1$ and $q_2$ in the 1-skeleton graph of $(q,1_{\mathcal{M}_G})$, and then we conclude the result.
\par Notice that $\{V_0\cup V_1,V_0\cup V_2\}\subseteq \mathcal{C}(G)$. Moreover, we have $T'\in \{V_0\cup V_1,V_0\cup V_2\}$, for every $T'\in \mathcal{C}(G)$ such that $n-1\notin T'$ and $n\notin T'$. Indeed, one could see that $T'\supseteq V_0\cup V_1$ or $T'\supseteq V_0\cup V_2$, for every $T'\in \mathcal{C}(G)$ such that $n-1\notin T'$ and $n\notin T'$. Without loss of generality we can assume that $T'\supseteq V_0\cup V_1$. Now we claim that $T'= V_0\cup V_1$. Assume on contrary that there exists  $v_2\in T'\cap V_2$. This implies that $v_2$ is not a cut vertex of $G-(T'\setminus \{v_2\})$, a contradiction to the fact that $T'\in \mathcal{C}(G)$. Therefore, we have either $q'\supseteq q_1$ or $q'\supseteq q_2$, for every $q'\in (q,1_{\mathcal{M}_G})$. Now, assume on contrary that there exists a path $\ell\hspace{-1.2mm}:q_1,q'_1,\ldots,q'_t,q_2$, between $q_1$ and $q_2$ in the 1-skeleton graph of $(q,1_{\mathcal{M}_G})$. Moreover, we may assume that $\ell$ is an induced path between the vertices $q_1$ and $q_2$. Now, we have that $t\geq 2$. Indeed, $t=1$ implies that $q'_1\supseteq q_1$ and  $q'_1\supseteq q_2$, and hence $q'_1=q$, a contradiction.
\par Now, if $q'_1\subseteq q'_2$, then $q'_2\supseteq q_1$. This clearly contradicts the minimality of the path $\ell$. So, we have $q'_1\supseteq q'_2$. On the other hand, we have $q'_2\supseteq q_2$, since $q'_2\nsupseteq q_1$. Therefore, $q'_1\supseteq q_2$, a contradiction to the minimality of $\ell$. 
\par So, there is no path between two vertices $q_1$ and $q_2$ in the 1-skeleton graph of $(q,1_{\mathcal{M}_G})$, as desired.

\par \vspace{.5cm}

$(b)$ Clearly we have $q=(x_1,\ldots,x_{n-2},y_1,\ldots,y_{n-2})+(f_{n-1,n})$, since $c_H(T)=1$. Let $q'=(x_1,\ldots,x_{n-2},y_1,\ldots,y_{n-2})$.
\par First assume that $G\notin \mathcal{G}_T$. We show that $\widetilde{H}^{1}((q,1_{{\mathcal{M}_G}});\KK)=0$. We consider the following cases:
\par Case 1: Assume that $q'\notin (q,1_{\mathcal{M}_G})$. We claim that $(q,1_{\mathcal{M}_G})$ is a meet-contractible poset. Then the result follows by Lemma \ref{meet}. 
\par Clearly, $P_\emptyset(G)\in (q,1_{\mathcal{M}_G})$. Now, let $q_1\in (q,1_{\mathcal{M}_G})$ such that $q_1\neq P_\emptyset(G)$. By Lemma \ref{crucial}, we have that $q_1=P_{T_1}(H_1)$, for some graph $H_1$ on $[n]$ and some $T_1\subseteq [n]$. Also, we have that $T_1\subsetneqq T$, since $q'\notin (q,1_{\mathcal{M}_G})$. Therefore, $q_1+P_\emptyset(G)\in (q,1_{\mathcal{M}_G})$, by Lemma \ref{vital}. Moreover, it is observed that $q_1+P_\emptyset(G)$ is the meet of the elements $q_1$ and $P_\emptyset(G)$. Therefore, $(q,1_{\mathcal{M}_G})$ is a meet-contractible poset, as desired.
\par Case 2: Assume that $q'\in (q,1_{\mathcal{M}_G})$. Let $\Delta=\Delta(q,1_{\mathcal{M}_G})$, $\Delta_1=\mathrm{star}_{\Delta}(q')$ and $\Delta_2=\mathrm{del}_{\Delta}(q')$. One has $\mathrm{del}_{\Delta}(q')=\Delta ((q,1_{\mathcal{M}_G})\setminus \{q'\})$. Then by a similar argument as in Case 1, it follows that $\Delta_2$ is a contractible simplicial complex. On the other hand $\Delta_1\cap\Delta_2=\mathrm{link}_\Delta(q')=\Delta(q',1_{\mathcal{M}_G})$, since $q'$ is a  minimal element in the poset $(q,1_{\mathcal{M}_G})$.

\par Now, first assume that $\Delta_1\cap\Delta_2=\{\emptyset\}$. So, clearly we have ${H}_{0}(\mathrm{link}_{\Delta}(q');\KK)=0$. Now, since $\Delta_1$ is a cone and $\Delta_2$ is contractible, the  Mayer-Vietoris sequence
\[
\cdots\rightarrow
H_1(\mathrm{star}_{\Delta}(q');\KK)\oplus H_1(\mathrm{del}_{\Delta}(q');\KK)\rightarrow H_1(\Delta;\KK)\rightarrow H_{0}(\mathrm{link}_{\Delta}(q');\KK)\rightarrow \cdots
\]
implies the result.
\par Next assume that $\Delta_1\cap\Delta_2\neq \{\emptyset\}$. Hence, the reduced Mayer-Vietoris sequence 
\[
\cdots \rightarrow \widetilde{H}_1(\mathrm{star}_{\Delta}(q');\KK)\oplus \widetilde{H}_1(\mathrm{del}_{\Delta}(q');\KK)\rightarrow \widetilde{H}_1(\Delta;\KK)\rightarrow \widetilde{H}_{0}(\mathrm{link}_{\Delta}(q');\KK)\rightarrow\cdots
\]
is induced. On the other hand, $q'\in\mathcal{M}_G$. Thus, By Lemma \ref{crucial}, we have $q'=P_{T'}(H')$, for some graph $H'$ on $[n]$ and some $T'\subseteq [n]$. Now, it is easily seen that  $T'=T$. Also, $c_{H'}(T')=2$, since otherwise we get $q'=q$, a contradiction.  Therefore, since $G\notin\mathcal{G}_T$, by part $(a)$ we have $\widetilde{H}_{0}(\mathrm{link}_{\Delta}(q');\KK)=\widetilde{H}_{0}(\Delta(q',1_{\mathcal{M}_G});\KK)=0$, and hence by using the latter Mayer-Vietoris sequence we get the result.

\par \vspace{.5cm}
 
Now, for the converse, assume that $G\in \mathcal{G}_T$. We show that $\widetilde{H}^{1}((q,1_{{\mathcal{M}_G}});\KK)\neq0$. It is clear by Definition \ref{gpich} that $q'=P_T(G)$, and hence $q'\in (q,1_{\mathcal{M}_G})$. Moreover, Definition \ref{gpich} again implies that $(q',1_{\mathcal{M}_G})$ is a non-empty poset. Indeed, by the notation of Definition \ref{gpich} we have $P_{V_0\cup V_1}(G),P_{V_0\cup V_2}(G)\in (q',1_{\mathcal{M}_G})$.  Now by keeping the same notation and also by the same argument as in Case 2 in above, the exact sequence
 
\[
0 \rightarrow \widetilde{H}_1(\Delta;\KK)\rightarrow \widetilde{H}_{0}(\mathrm{link}_{\Delta}(q');\KK)\rightarrow 0
\]
is induced. On the other hand, we have $c_G(T)=2$, by Definition \ref{gpich}. Therefore, since $G\in \mathcal{G}_T$, part $(a)$ implies that $\widetilde{H}_{0}(\mathrm{link}_{\Delta}(q');\KK)\neq0$, and hence the result follows. 

\end{proof}

Now we state the following lemma which is used in the proof of Theorem \ref{main}.  

\begin{Lemma}\label{nice suggestion}
Let $G$ be a graph on $[n]$ with $n\geq 4$. If there exists $T'\in \mathcal{C}(G)$ with $|T'|=n-3$ and $c_G(T')=2$, then either $G\in \mathcal{G}_{T}$ for some $T\subseteq [n]$, or $G=H\ast(K_1\dot{\cup}K_2)$ for some graph $H$ on $n-3$ vertices.

\end{Lemma}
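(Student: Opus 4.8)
The hypothesis gives a set $T' \in \mathcal{C}(G)$ with $|T'| = n-3$ and $c_G(T') = 2$; write $[n] \setminus T' = \{u, v, w\}$, the three vertices outside $T'$. Since $G - T'$ has exactly two connected components, the three vertices $u,v,w$ split into a $\{2,1\}$ pattern among those components; after relabeling, say $v$ and $w$ lie in one component and $u$ alone in the other. The first goal is to understand the adjacency of $u,v,w$ to each other. Because $u$ is isolated from $\{v,w\}$ in $G - T'$, neither $\{u,v\}$ nor $\{u,w\}$ is an edge of $G$. The edge $\{v,w\}$ may or may not be present, and this dichotomy will drive the two conclusions: if $\{v,w\} \notin E(G)$ I expect to land in some $\mathcal{G}_T$ with $|T| = n-2$, and if $\{v,w\} \in E(G)$ I expect to land in the join case $G = H \ast (K_1 \dot\cup K_2)$.

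First I would treat the case $\{v,w\} \notin E(G)$. Here $u,v,w$ are pairwise non-adjacent. I would take $T = T' \cup \{v\}$, so $|T| = n-2$, and try to verify that $G \in \mathcal{G}_T$ with the two non-adjacent vertices of Definition~\ref{gpich} being $u$ and $w$. The subsets $V_0, V_1, V_2 \subseteq T$ would be chosen from the neighborhoods: $V_0 = N_G(u) \cap N_G(w)$, $V_1 = N_G(u) \setminus N_G(w)$, $V_2 = (N_G(w) \setminus N_G(u)) \cup \{v\}$ (or a similar partition), with the leftover vertices of $T$ absorbed into $V_0$. The cut point property of $T'$ — in particular that $v$ is a cut vertex of $G - (T' \setminus \{v\}) = G - (T \setminus \{v\})$ separating $u$ from $w$ — should force every vertex adjacent to both of the two components to lie in $T'$, i.e. the vertices of the component containing $v,w$ other than $v,w$ themselves must not be adjacent to $u$ except through $T'$. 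The delicate bookkeeping is to check condition (2) of Definition~\ref{gpich}, namely $\{v_1, v_2\} \in E(G)$ for all $v_1 \in V_1$, $v_2 \in V_2$; this is where I expect to use most heavily that $T'$ has the cut point property, ensuring that removing any vertex of $V_1 \cup V_2$ from an appropriate induced subgraph disconnects the relevant pair, which translates into the required edges being present.

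Next I would treat the case $\{v,w\} \in E(G)$. Now $\{v,w\}$ induces a $K_2$, $u$ is an isolated vertex relative to $\{v,w\}$, so the induced subgraph on $\{u,v,w\}$ is $K_1 \dot\cup K_2$. The claim is $G = H \ast (K_1 \dot\cup K_2)$ where $H$ is the induced subgraph on $T'$ (which has $n-3$ vertices). For this I must show that every vertex $t \in T'$ is adjacent to each of $u, v, w$. The cut point property of $T'$ means, for each $t \in T'$, that $t$ is a cut vertex of $G - (T' \setminus \{t\})$, a graph on $\{t, u, v, w\}$; since $G - T'$ has $u$ in one component and $\{v,w\}$ in another, $t$ must "glue" these, and being a cut vertex in a $4$-vertex graph where $u$ is separated from $\{v,w\}$ forces $t$ adjacent to $u$ and to at least one of $v,w$ — and then running the argument for the relevant pair forces adjacency to both. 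Pinning down exactly which small-graph configurations on $\{t,u,v,w\}$ admit $t$ as a cut vertex, and eliminating all but the complete-join one, is the main technical obstacle; it is a finite check but must be done carefully, and one should double-check the edge case $n = 4$ where $T' = \{t\}$ is a single vertex. Finally I would note that in both cases the excluded form $G' \ast 2K_1$ plays no role in the statement as written (the lemma does not assume $G \neq G' \ast 2K_1$), so no extra care is needed there; the conclusion is exactly the disjunction claimed.
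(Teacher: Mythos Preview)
Your case split is the wrong one, and it makes the second half of the argument collapse. Since $G-T'$ has exactly the three vertices $u,v,w$ and exactly two connected components, and you have placed $v,w$ in the same component, the edge $\{v,w\}$ is forced: a two-vertex component with no edge would be two components, giving $c_G(T')=3$. So your first case $\{v,w\}\notin E(G)$ is vacuous, and the entire lemma must be proved inside your second case.

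In that second case you claim that the cut point property forces each $t\in T'$ to be adjacent to all three of $u,v,w$, whence $G=H\ast(K_1\dot\cup K_2)$. That claim is false. On the four vertices $\{t,u,v,w\}$ with $\{v,w\}\in E(G)$ and $u$ isolated from $v,w$, the vertex $t$ is a cut vertex as soon as it is adjacent to $u$ and to \emph{at least one} of $v,w$; adjacency to both is not required. Concretely, edges $\{t,u\},\{t,v\},\{v,w\}$ already make $t$ a cut vertex with $t\not\sim w$. Hence the cut point property of $T'$ gives only: every $t\in T'$ satisfies $t\in N_G(u)$ and $t\in N_G(v)\cup N_G(w)$. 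The dichotomy that actually drives the two conclusions of the lemma is whether $T'\subseteq N_G(v)\cap N_G(w)$ (then, since also $T'\subseteq N_G(u)$, one gets $G=H\ast(K_1\dot\cup K_2)$) or not. In the latter case pick $j\in\{v,w\}$ with $T'\not\subseteq N_G(j)$, set $T=[n]\setminus\{u,j\}$, and take
\[
V_0=N_G(j)\cap T',\qquad V_1=T'\setminus N_G(j),\qquad V_2=N_G(j)\setminus T'.
\]
Then $V_2$ is the singleton $\{k\}$ with $\{k\}=\{v,w\}\setminus\{j\}$, $V_1\neq\emptyset$ by choice of $j$, $N_G(u)=T'=V_0\cup V_1$, $N_G(j)=V_0\cup V_2$, and condition~(2) of Definition~\ref{gpich} holds because every $t\in V_1=T'\setminus N_G(j)$ lies in $N_G(k)$ by the ``at least one of $v,w$'' fact above. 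This is exactly the paper's argument; once you fix the case split, the verification is short and clean, and the elaborate bookkeeping you anticipated for condition~(2) disappears.
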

\begin{proof}
Without loss of generality assume that $T'=\{1,\ldots,n-3\}$. We may also assume that $n$ is the isolated vertex of $G-T'$, since $c_G(T')=2$. Therefore, for every $i\in T'$ we have $i\in N_G(n)$, and either $i\in N_G(n-1)$ or $i\in N_G(n-2)$, since $T'\in \mathcal{C}(G)$. Now suppose that $G\neq H\ast(K_1\dot{\cup}K_2)$, for every graph $H$ on $n-3$ vertices. We show that $G\in \mathcal{G}_{T}$ for some $T\subseteq [n]$. 
\par Clearly, there exists $j\in \{n-2,n-1\}$ such that $T'\nsubseteq N_G(j)$. Let $V_0=N_G(j)\cap T', V_1=T'\setminus N_G(j)$ and $V_2=N_G(j)\setminus T'$. Now, by putting $u=n$ and $w=j$ in Definition \ref{gpich}, one checks that $G\in \mathcal{G}_T$, where $T=[n]\setminus \{n,j\}$.
\end{proof}

Finally, we need to state the following remark that will be used in the proof of Theorem~\ref{main}. The proof of this remark can be verified by taking a precise look at the proof of \cite[Theorem~4.4]{RSK2}.
\begin{Remark}\label{remark}
Let $G$ be a graph on $[n]$ with $n\geq 2$. Let $\widehat{\mathfrak{m}}=P_T(H)$, where $H$ is a graph on $[n]$ with $|T|=n-1$. Then the posets $(\mathfrak{m},1_{\mathcal{M}_G})$ and  $(\widehat{\mathfrak{m}},1_{\mathcal{M}_G})$ are contractible.
\end{Remark}
Now we are ready to prove the main theorem of this section.
\par \medskip Proof of Theorem \ref{main}: $(a)\Rightarrow (b)$ Assume that $G$ is not a $D_5$-type graph. We show $\depth S/J_G\neq 5$. Notice that if $G=G'\ast 2K_1$ for some graph $G'$, then $\depth S/J_G=4$, by \cite[Theorem~5.3]{RSK2}. So, we may assume that $G\neq G'\ast 2K_1$, for any graph $G'$. Now, by the definition of depth and by Theorem \ref{Hochster}, it is enough to show that $M_{5,q}=\dim_{\KK} \widetilde{H}^{4-d_q}((q,1_{{\mathcal{M}_G}});\KK)=0$, for all $q\in \mathcal{M}_G$. 
\par Let $q$ be an arbitrary element of the poset $\mathcal{M}_G$. We have $q=P_{T}(H)$ for some graph $H$ on $[n]$ and some $T\subseteq [n]$, by Lemma~\ref{crucial}. If $d_q\geq 6$, then obviously we have the result. Moreover, it is easily seen that there is no $q'\in \mathcal{M}_G$ such that $d_{q'}=1$. Therefore, we assume that $d_q\in\{0,2,3,4,5\}$. Now, we consider the following cases:
\par Let $d_q\in \{0,2\}$. So, $\mathrm{ht}\hspace{0.9mm}q\in \{2n-2,2n\}$. This implies that $|T|-c_H(T)\in\{n-2,n\}$. Therefore, without loss of generality, we assume that either $q=\mathfrak{m}$ or $q=\widehat{\mathfrak{m}}$. Now, the result follows, since the posets $(\mathfrak{m},1_{{\mathcal{M}_G}})$ and $(\widehat{\mathfrak{m}},1_{{\mathcal{M}_G}})$ are contractible by Remark \ref{remark}.
\par Let $d_q=3$. Then $|T|-c_H(T)=n-3$, and hence $|T|=n-2$ and $c_H(T)=1$. Now, by Definition \ref{D5}, $G\notin \mathcal{G}_T$, since $G$ is not a $D_5$-type graph. Thus, we have $M_{5,q}=\dim_{\KK}\widetilde{H}^{1}((q,1_{{\mathcal{M}_G}});\KK)=0$, by Theorem \ref{corona} part $(b)$.
\par Let $d_q=4$. It follows that $|T|-c_H(T)=n-4$. So, we have  either $|T|=n-2$ and $c_H(T)=2$, or $|T|=n-3$ and $c_H(T)=1$. 
\par First assume that $|T|=n-2$ and $c_H(T)=2$. Therefore, Theorem~\ref{corona} part $(a)$ implies that $M_{5,q}=\dim_{\KK} \widetilde{H}^{0}((q,1_{{\mathcal{M}_G}});\KK)=0$, since $G\notin \mathcal{G}_T$.
\par Next assume that $|T|=n-3$ and $c_H(T)=1$. Without loss of generality we assume that $T=\{1,\ldots,n-3\}$. Therefore,
\[q=(x_1,\ldots,x_{n-3},y_1,\ldots,y_{n-3})+(f_{n-2,n-1},f_{n-2,n},f_{n-1,n}).\]
\par Let $q_{1}, q_{2}\in (q,1_{\mathcal{M}_G})$ and $q_1\neq q_2$. Then, there exist $T_1,T_2\in \mathcal{C}(G)$ such that $q_1\supseteq P_{T_1}(G)$ and $q_2\supseteq P_{T_2}(G)$. Moreover, we have $T_1,T_2\subseteq T$, since $q_{1}, q_{2}\in (q,1_{\mathcal{M}_G})$. Now, we distinguish the following cases:
\par First assume that $T_1,T_2\subsetneqq T$. Therefore, by Lemma \ref{vital}, we have that $q_{1}+P_{\emptyset}(G)\in (q,1_{\mathcal{M}_G})$ and $q_{2}+P_{\emptyset}(G)\in (q,1_{\mathcal{M}_G})$, since $q_{1}+P_{\emptyset}(G)\subsetneqq q$ and $q_{2}+P_{\emptyset}(G)\subsetneqq q$. So, we get the path
\[q_{1}, P_{T_1}(G), P_{T_1}(G)+P_{\emptyset}(G), P_{\emptyset}(G),  P_{\emptyset}(G)+P_{T_2}(G), P_{T_2}(G), q_{2}\]
in the 1-skeleton graph of the order complex of the poset $(q,1_{\mathcal{M}_G})$.
\par Next assume that $T_1=T$ or $T_2=T$. If $T_1=T_2$, then we get the path $q_1,P_{T_1}(G)=P_{T_2}(G),q_2$, as desired. So, without loss of generality, we assume that $T_1=T$ and $T_2\subsetneqq T$. One has $c_G(T_1)\in\{2,3\}$, since $T_1\in \mathcal{C}(G)$. Notice that if $c_G(T_1)=2$, then Lemma \ref{nice suggestion} implies  that $G$ is a $D_5$-type graph, a contradiction. So, we have $c_G(T_1)=3$. Furthermore, there exist two vertices $\alpha\in T_1$ and $\beta\in\{n-2,n-1,n\}$, such that $\{\alpha,\beta\}\notin E(G)$. Indeed, otherwise we get $G=H\ast 3K_1$, where $H=G-\{n-2,n-1,n\}$, which is a contradiction with Definition \ref{D5}. Now, one could see that 
\[P_{T_1}(G)+P_{{T_1}\setminus\{\alpha\}}(G)=(x_1,\ldots,x_{n-3},y_1,\ldots,y_{n-3})+(f_{i,j}),\]
where $i<j$ and $i,j \in \{n-2,n-1,n\}\setminus \{\beta\}$. 
Therefore, we have that $P_{T_1}(G)+P_{{T_1}\setminus\{\alpha\}}(G)\in (q,1_{\mathcal{M}_G})$. So, we get the path
\begin{figure}[H]
\centering
\begin{tikzpicture}[scale=1,line cap=round,line join=round,>=triangle 45,x=1.0cm,y=1.0cm]
\draw (-5.,0.)-- (-4.,1.);
\draw (-4.,1.)-- (-3.02,0.);
\draw (-3.02,0.)-- (-1.,1.);
\draw (-1.,1.)-- (1.,0.);
\draw (1.,0.)-- (3.,1.);
\draw (3.,1.)-- (5.,0.);
\draw (5.,0.)-- (6.,1.);
\draw (6.,1.)-- (7.,0.);
\draw (-5.45,0.) node[anchor=north west] {$q_1$};
\draw (-4.72,1.73) node[anchor=north west] {$P_{T_1}(G)$};
\draw (-4.8,0.) node[anchor=north west] {$P_{T_1}(G)+P_{{T_1}\setminus\{\alpha\}}(G)$};
\draw (-2.13,1.73) node[anchor=north west] {$P_{{T_1}\setminus\{\alpha\}}(G)$};
\draw (-0.59,0.) node[anchor=north west] {$P_{{T_1}\setminus\{\alpha\}}(G)+P_{\emptyset}(G)$};
\draw (2.32,1.72) node[anchor=north west] {$P_{\emptyset}(G)$};
\draw (3.5,0.) node[anchor=north west] {$P_{\emptyset}(G)+P_{T_2}(G)$};
\draw (5.25,1.73) node[anchor=north west] {$P_{T_2}(G)$};
\draw (6.9,0.) node[anchor=north west] {$q_{2}$};
\begin{scriptsize}
\draw [fill=blue] (3.,1.) circle (1.5pt);
\draw [fill=blue] (5.,0.) circle (1.5pt);
\draw [fill=blue] (6.,1.) circle (1.5pt);
\draw [fill=blue] (7.,0.) circle (1.5pt);
\draw [fill=blue] (-1.,1.) circle (1.5pt);
\draw [fill=blue] (-3.02,0.) circle (1.5pt);
\draw [fill=blue] (-4.,1.) circle (1.5pt);
\draw [fill=blue] (-5.,0.) circle (1.5pt);
\draw [fill=blue] (1.,0.) circle (1.5pt);
\end{scriptsize}
\end{tikzpicture}
\end{figure}

between $q_1$ and $q_2$ in the 1-skeleton graph of the order complex of the poset $(q,1_{\mathcal{M}_G})$.
\par Therefore, it follows from the both cases that $(q,1_{\mathcal{M}_G})$ is connected, and hence the desired result follows.
\par Let $d_q=5$. It follows that $|T|-c_H(T)=n-5$. So, we have that either $|T|=n-3$ and $c_H(T)=2$, or $|T|=n-4$ and $c_H(T)=1$. Now the result follows once we show that $(q,1_{{\mathcal{M}_G}})$ is a non-empty poset.
\par First suppose that $|T|=n-3$ and $c_H(T)=2$. Assume on  contrary that $(q,1_{{\mathcal{M}_G}})$ is an empty poset, i.e. $q\in \Min(J_G)$. Then, $q=P_{T'}(G)$ for some $T'\in \mathcal{C}(G)$. It follows that $T'=T$ and $c_G(T')=c_H(T)=2$. Therefore, Lemma \ref{nice suggestion} implies that $G$ is a $D_5$-type graph, a contradiction.
\par Next suppose that $|T|=n-4$ and $c_H(T)=1$. Therefore, we have $P_{\emptyset}(G)\subseteq q$. Moreover, the assumption $n\geq 5$ implies that $T\neq \emptyset$, and hence $P_{\emptyset}(G)\in (q,1_{\mathcal{M}_G})$, as desired.
 
\par \vspace{.5cm}

$(b)\Rightarrow (a)$ Assume that $G$ is a $D_5$-type graph. Then, Corollary \ref{combin} implies that $\depth S/J_G\geq5$.  Therefore, the result follows if we show that $\depth S/J_G\leq5$. To do so, we consider the following cases:
\par First assume that $G\in \mathcal{G}_T$, for some $T\subseteq [n]$ with $|T|=n-2$. Let $q=P_T(G)$. We have $q\in \mathcal{M}_G$ and by Definition \ref{gpich} we have $c_{G}(T)=2$. Thus, Theorem \ref{corona} part $(a)$ implies that $(q,1_{\mathcal{M}_G})$ is not connected. Since $d_q=4$, we have that $M_{5,q}=\dim_{\KK} \widetilde{H}^{0}((q,1_{{\mathcal{M}_G}});\KK)\neq 0$. Therefore, we get $H_{\mathfrak{m}}^{5}(S/J_G)\neq 0$, by Theorem~ \ref{Hochster}. This yields that $\depth S/J_G\leq5$, as desired.
\par Next assume that $G=H\ast 3K_1$, for some graph $H$. If $H$ is a complete graph, then the result follows by \cite[Theorem~3.9]{KumarS}. If $H$ is not complete, then it follows from  \cite[Theorem~4.3]{KumarS} and \cite[Theorem~4.4]{KumarS} that $\depth S/J_G=5$, as desired.
\par Finally, suppose that $G=H\ast(K_1\dot{\cup}K_2)$, for some graph $H$. The result follows by \cite[Theorem~3.9]{KumarS} if $H$ is a complete graph. If $H$ is not complete, then it follows from  \cite[Theorem~4.3]{KumarS} and \cite[Theorem~4.4]{KumarS} that $\depth S/J_G=5$, as desired.

\vspace{1cm}
\par \textbf{Acknowledgments:} The authors would like to thank the anonymous referees for their careful reading of the manuscript and for their valuable comments and suggestions. The authors would also like to thank Institute for Research in Fundamental Sciences (IPM) for financial support. The research of the second author was in part supported by a grant from IPM (No. 1400130113).

\end{document}